\theoremstyle{plain}
\newtheorem{theorem}{\textbf{Theorem}}[section]
\newtheorem{proposition}[theorem]{\textbf{Proposition}}
\newtheorem{corollary}[theorem]{\textbf{Corollary}}	
\newtheorem{lemma}[theorem]{\textbf{Lemma}}
\theoremstyle{definition}
\newtheorem{remark}[theorem]{\textbf{Remark}}
\newtheorem{example}[theorem]{Example}
\theoremstyle{plain}
\theoremstyle{plain}
\newcommand{\thistheoremname}{}
\newtheorem*{genericthm*}{\thistheoremname}
\newenvironment{namedthm*}[1]
{\renewcommand{\thistheoremname}{#1}%
	\begin{genericthm*}}
	{\end{genericthm*}}
\newenvironment{namedtheorem*}[1]
{\renewcommand{\thistheoremname}{#1}%
	\begin{genericthm*}}
	{\end{genericthm*}}
\numberwithin{equation}{section}
\newcommand{\df}{\overset{\text{def}}{=}}
\newcommand{\lra}{\longrightarrow}
\newcommand{\pr}{\operatorname{pr}}
\title{The Martens-Mumford theorem and the Green-Lazarsfeld secant conjecture}
\author{Daniele Agostini}
\address{Max-Planck-Instit\"ut f\"ur Mathematik in den Naturwissenschaften, Inselstrasse 22, 04013 Leipzig, DE}
\email{daniele.agostini@mis.mpg.de}
\begin{document}

\begin{abstract}
   	The Green-Lazarsfeld secant conjecture predicts that the syzygies of a curve of sufficiently high degree are controlled by its special secants. We prove this conjecture for all curves of Clifford index at least two and not bielliptic and for all line bundles of a certain degree. Our proof is based on a classic result of Martens and Mumford on Brill-Noether varieties and on a simple vanishing criterion that comes from the interpretation of syzygies through symmetric products of curves. 
\end{abstract}

\maketitle

\section{Introduction}

The Green-Lazarsfeld secant conjecture predicts that the syzygies of a curve of sufficiently high degree are controlled by its special secants. More precisely, let $C$ be a smooth projective curve of genus $g$ and Clifford index $\operatorname{Cliff}(C)\geq c$, and let $L$ be a nonspecial line bundle of degree $\deg L = 2g + p + 1 -c$.
Then the conjecture states that 
\begin{equation}\label{conj:secant}
L \text{ has property } (N_p)  \qquad \text{ if and only if } \qquad L \text{ it is } (p+1)\text{ -very ample }.
\end{equation} 
We have stated here the only the nonspecial case of the conjecture, since the special case falls into Green's conjecture for canonical curves. Moreover, the ``only if'' direction is known and relatively straightforward to prove, see for example \cite[Theorem 4.36]{AproduNagelBook2010}, and the open, and more difficult part, is the converse.

The case $p=0$ and $c$ arbitrary was proven by Green and Lazarsfeld in \cite[Theorem  1]{GreenLazarsfeldProjective1986}. Instead, when $p$ is arbitrary the conjecture was proven by Green \cite[Theorem 4.a.1]{GreenKoszul1984}  for $c=0$ and by Green and Lazarsfeld \cite[Theorem 2]{GreenLazarsfeldFinite1988} for $c=1$. Furthermore, the conjecture was proven by Farkas and Kemeny \cite[Theorem 1.3]{FarkasKemenyGeneric2015} for every $p$ and $c$ under the assumption that both  the curve $C$ and the bundle $L$ are general. In the same paper, the authors give also more precise result that avoid the generality assumptions, such as \cite[Theorem 1.4, Theorem 1.5]{FarkasKemenyGeneric2015}. 

Here we would like to show how to attack the case $c=2$ of the Secant Conjecture, using classical results of Martens and Mumford that bound the dimensions of the Brill-Noether loci $W^r_d(C)$ in terms of the Clifford index. More precisely, Martens' Theorem asserts that if $C$ is a curve of Clifford index $\operatorname{Cliff}(C)\geq 1$ (meaning that $C$ is not hyperelliptic), then 
\begin{equation}\label{eq:martens}
\dim W^1_{g-1}(C)  = \rho(g,1,g-1) = g-4.
\end{equation}
Mumford refined this result as follows: suppose that the curve has Clifford index $\operatorname{Cliff}(C)\geq 2$ and is not bielliptic, then    
\begin{equation}\label{eq:mumford}
\dim W^1_{g-2}(C)  = \rho(g,1,g-2) = g-6.
\end{equation}
We have stated here just a partial version of the full results of Martens and Mumford, since it is the one that we will need. However, it turns out that the full results are equivalent to the versions given above \cite[Theorem 5.1, Theorem 5.2]{ACGH}.

Our result is the following:

\begin{namedthm*}{Main Theorem}
Let $C$ be a smooth curve of genus $g$ of Clifford index $\operatorname{Cliff}(C)\geq 2$ and not bielliptic. Then the Secant Conjecture holds for any line bundle of degree $\deg L = 2g+p-1$, i.e.
\begin{equation*}
L \text{ has property } (N_p)  \qquad \text{ if and only if } \qquad L \text{ it is } (p+1)\text{ -very ample }.
\end{equation*} 
\end{namedthm*}

Observe that this almost proves the Secant Conjecture for $c=2$: the only exception is that of bielliptic curves, which comes precisely from Mumford's Theorem \eqref{eq:mumford}.

Our strategy is based on the following simple vanishing result for syzygies: 

\begin{namedthm*}{Vanishing Criterion}
	Let $C$ be a smooth curve and $B,L$ two line bundles on $C$, with $H^0(C,B)=0$. Let also $p\geq 0$ and $D$ an effective divisor on $C$ of degree $m\leq p$.
	\begin{equation*}
	\text{If } \qquad K_{p-m,1}(C,B,L-D)=0 \qquad { then } \qquad K_{p,1}(C,B,L)=0
	\end{equation*} 
\end{namedthm*}

More generally, the same criterion holds without hypotheses on $B$ if we replace the Koszul cohomology with the Koszul cycles, see Remark \ref{remark:proofvanishingcriterion}. The proof of this result is fairly easy, and it can also be seen in the framework of Aprodu's projection method for syzygies, see Remark \ref{remark:projection}. However, what is more important is the idea of its proof: indeed, this can be reinterpreted as a very natural vanishing statement for tautological bundles on symmetric products, see Corollary \ref{cor:robcorollary}, and this leads to a more general version, see Lemma \ref{cor:basicvanishingsyzygies},  where we consider all divisors $D\in C_m$ at the same time, and which we actually need when proving the Main Theorem.     

Furthermore, the point of view of the symmetric product shows how to apply this criterion in order to attack the Secant Conjecture. Let us explain the strategy, starting with the case $c=0$. Then $C$ is a smooth curve of genus $g$ and $L$ a line bundle of degree $2g+p+1$, which is automatically nonspecial and $(p+1)$-very ample. The Secant Conjecture predicts that $K_{p,1}(C,L,L)=0$, and by duality \cite[Theorem 2.c.6]{GreenKoszul1984}, this is the same as $K_{g,1}(C,K_C-L,L)=0$. Since $L$ is nonspecial, we can apply the Vanishing Criterion with a general divisor $D\in C_g$, and then the Koszul complex shows that $K_{0,1}(C,K_C-L,L-D)=H^0(C,K_C-D)=0$.

Let us look at the next case, $c=1$. Now $C$ is not hyperelliptic and $L$ is of degree $2g+p$, nonspecial and $(p+1)$-very ample. Applying Koszul duality again we need to prove that $K_{g-1,1}(C,K_C-L,L)=0$. We can try to apply the Vanishing Criterion with a general divisor $D\in C_{g-2}$ and then we need to show $K_{1,1}(C,K_C-L,L-D)=0$. At this point, we can follow the proof in \cite{GreenLazarsfeldFinite1988}: first, since $C$ is not hyperelliptic, Martens' Theorem \eqref{eq:martens} implies that $K_C-D$ is globally generated, see Lemma \ref{lemma:c-1veryampleness}, so that we have an exact sequence
\begin{equation} 
0 \longrightarrow M_{K_C-D} \longrightarrow H^0(C,K_C-D)\otimes \mathcal{O}_C \longrightarrow \mathcal{O}_C(K_C-D) \longrightarrow 0 \end{equation}
then, we can identify, see Proposition \ref{prop:koszulcohomology}, $K_{1,1}(C,K_C-L,L-D) \cong H^0(C,M_{K_C-D}\otimes(L-D))$ and finally the base-point-free pencil trick shows that $M_{K_C-D} \cong D-L$, so that $H^0(C,M_{K_C-D}\otimes(L-D)) \cong H^0(C,L-K_C)=0$, where the last vanishing follows from the fact that $L$ is $(p+1)$-very ample, see Lemma \ref{lemma:p1veryampleness}.

Now we discuss the case of general $c\geq 1$: let $C$ be a smooth curve of genus $g$, of Clifford index $\operatorname{Cliff}(C)\geq c$ and let $L$ be a nonspecial line bundle on $C$, of degree $\deg L = 2g+p+1-c$ and $(p+1)$-very ample. By duality, the Secant Conjecture is equivalent to $K_{g-c,1}(C,K_C-L,L)=0$. Since $L$ is nonspecial, we can try to apply the vanishing criterion with a general divisor $D$ of degree $g-2c$: we need to show
\begin{equation}\label{eq:introDvanishing}
K_{c,1}(C,K_C-L,L-D)=0.
\end{equation}
Now we work on the symmetric product $C_{c}$, which parametrizes degree $c$ effective divisors on $C$. On the symmetric product we have the tautological bundle $E_{K_C-D}$ and the determinant bundle $N_{L-D} = \det E_{L-D}$. Voisin's interpretation of syzygies and an observation of Ein and Lazarsfeld \cite[Lemma 1.1]{EinLazarsfeldGonality2015} show that the syzygies are identified with the kernel of the multiplication map
\begin{small} 
\begin{equation}\label{eq:kermult}
K_{c,1}(C,K_C-L,L-D) \cong \operatorname{Ker} \left[ H^0(C_{c},E_{K_C-D})\otimes H^0(C_c,N_{L-D}) \to H^0(C_c,E_{K_C-D}\otimes N_{L-D}) \right].
\end{equation} 
\end{small}
Now, assume that 
\begin{equation}\label{eq:brillnoethercondition}
\dim W^1_{g-c}(C) = \rho(g,1,g-c) = g-2c-2.
\end{equation}
We have already seen that this is satisfied for $c=1$ thanks to Martens' Theorem \eqref{eq:martens}, and it is satisfied for $c=2$ if the curve is not bielliptic, thanks to Mumford's Theorem \eqref{eq:mumford}. In the general case, with this assumption one can prove that $E_{K_C-D}$ is globally generated, see Lemma \ref{lemma:c-1veryampleness}, so that there is an exact sequence
\begin{equation}\label{eq:exactsymmc}
0 \longrightarrow M_{E_{K_C-D}} \longrightarrow H^0(C,K_C-D)\otimes \mathcal{O}_{C_c} \longrightarrow E_{K_C-D} \longrightarrow 0
\end{equation}
and then \eqref{eq:kermult} shows that
\begin{equation}\label{eq:kermult2}
K_{c,1}(C,K_C-L,L-D) \cong H^0(C_c,M_{E_{K_C-D}}\otimes N_{L-D})
\end{equation}
Furthermore, the exact sequence \eqref{eq:kermult} yields a Buchsbaum-Rim resolution of $M_{E_{K_C-D}}\otimes N_{L-D}$, which is basically the generalization of the base-point-free pencil trick. Finally, assuming some cohomological vanishings on the symmetric product and the $(p+1)$-very ampleness of $L$, can then be used to prove the desired result $H^0(C_c,M_{E_{K_C-D}}\otimes N_{L-D})=0$.

The proof of our Main Theorem follows this strategy for $c=2$, and we can prove the required vanishing of \eqref{eq:kermult2} unless $g=p+4$, in which case we obtain $H^0(C_2,M_{E_{K_C-D}}\otimes N_{L-D}) \cong \wedge^4 H^0(C,K_C-D)\otimes H^0(C,D)$, which is unfortunately nonzero, but rather one-dimensional, for a general $D\in C_{g-4}$. However, letting $D$ vary, this means that a nonzero syzygy in $K_{p,1}(C,L,L)$ would give a section of a certain line bundle on $C_{g-4}$, but one can prove that, thanks to the assumption of $(p+1)$-very ampleness, this line bundle has no sections. Thus the proof is concluded in this case as well.

After proving the Vanishing Criterion and its generalizations, what is left to complete the strategy outlined above are the appropriate vanishing statements for cohomology of tautological bundles on symmetric products. In particular, these vanishings would prove the Secant Conjecture for every curve $C$ which satisfies the Brill-Noether conditions \eqref{eq:brillnoethercondition}. We can show that they hold for $c=2$, which gives our Main Theorem. Furthermore, the natural generalizations of these vanishing statements would allow to prove the Secant Conjecture for every curve which satisfies the Brill-Noether condition \eqref{eq:brillnoethercondition}: we include some comments about this at the end of the paper.

The paper is structured as follows: in Section \ref{sec:background} we recall some background material on $p$-very ampleness, symmetric products and syzygies. In Section \ref{section:vanishingresult} we prove the Vanishing Criterion of the Introduction, in a more general form. In Section \ref{section:secantconj}, we use it to prove our Main Theorem in all cases except $g=p+4$. Finally, in Section \ref{sec:global}, we globalize our strategy and we prove this last remaining case.

\vspace{10pt}

\textbf{Acknowledgments}: I would like to warmly thank Rob Lazarsfeld for generously sharing his ideas, that were fundamental for this project. I thank Gavril Farkas and Michael Kemeny for useful conversations. This work was started during a visit at Stony Brook University and I would like to thank their Department of Mathematics for the hospitality and the DAAD and the Berlin Mathematical School for the financial support.
\newpage 

\section{Background}\label{sec:background}

In this section we collect some facts that we will need later. 

\subsection{Higher order embeddings}

First recall that a line bundle $L$ on a smooth curve $C$ is $(p+1)$-very ample if and only if for every effective divisor $\xi\subseteq C$ of degree $p+2$, the evaluation map:
\begin{equation}\label{evLxi}
\operatorname{ev}_{L,\xi}\colon H^0(C,L) \lra H^0(C,L\otimes \mathcal{O}_{\xi})
\end{equation}
is surjective. Equivalently, $L$ fails to be $(p+1)$-very ample if the linear system $|L|$ embeds $C$ with a $(p+2)$-secant $p$-plane. Another characterization can be given via Riemann-Roch:

\begin{lemma}\label{kveryample}
	A line bundle $L$ on a smooth curve $C$ fails to be $(p+1)$-very ample if and only if there exists an effective divisor $\xi\subseteq C$ of degree $p+2$ such that $h^1(C,L-\xi) > h^1(C,L)$ or equivalently $h^0(C,K_C+\xi-L) > h^0(C,K_C-L)$.
\end{lemma}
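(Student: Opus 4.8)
The plan is to read off the statement from the defining property recalled just above the lemma, namely that $L$ is $(p+1)$-very ample precisely when the evaluation map $\operatorname{ev}_{L,\xi}\colon H^0(C,L)\to H^0(C,L\otimes\mathcal{O}_\xi)$ is surjective for every effective $\xi$ of degree $p+2$. So $L$ \emph{fails} to be $(p+1)$-very ample iff there is some such $\xi$ for which $\operatorname{ev}_{L,\xi}$ is not surjective, and the whole task reduces to expressing non-surjectivity of $\operatorname{ev}_{L,\xi}$ cohomologically. The natural tool is the short exact sequence of sheaves
\begin{equation*}
0 \longrightarrow L-\xi \longrightarrow L \longrightarrow L\otimes\mathcal{O}_\xi \longrightarrow 0,
\end{equation*}
whose induced map on global sections in the middle is exactly $\operatorname{ev}_{L,\xi}$.

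Next I would pass to the long exact sequence in cohomology. Since $\xi$ is a zero-dimensional subscheme, $L\otimes\mathcal{O}_\xi$ is supported on points, so $H^1(C,L\otimes\mathcal{O}_\xi)=0$ and $h^0(C,L\otimes\mathcal{O}_\xi)=\deg\xi=p+2$. The long exact sequence therefore reads
\begin{equation*}
0 \to H^0(L-\xi) \to H^0(L) \xrightarrow{\operatorname{ev}_{L,\xi}} H^0(L\otimes\mathcal{O}_\xi) \xrightarrow{\delta} H^1(L-\xi) \to H^1(L) \to 0.
\end{equation*}
From this I can extract two facts at once. First, $\operatorname{ev}_{L,\xi}$ is surjective if and only if the connecting map $\delta$ vanishes, equivalently if and only if $H^1(L-\xi)\to H^1(L)$ is injective. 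Second, the last nonzero term being $H^1(L)$ shows that this map $H^1(L-\xi)\to H^1(L)$ is always surjective, so $h^1(L-\xi)\geq h^1(L)$ unconditionally. Combining the two, injectivity holds iff it is an isomorphism iff $h^1(L-\xi)=h^1(L)$; hence $\operatorname{ev}_{L,\xi}$ fails to be surjective exactly when $h^1(L-\xi)>h^1(L)$. Taking the existential statement over $\xi$ gives the first characterization. (Equivalently one can avoid the connecting-map language and just note $\operatorname{rank}\operatorname{ev}_{L,\xi}=h^0(L)-h^0(L-\xi)$, which by Riemann-Roch equals $p+2-\bigl(h^1(L-\xi)-h^1(L)\bigr)$, so full rank $p+2$ is equivalent to the equality of $h^1$'s.)

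Finally, the equivalent formulation follows immediately from Serre duality: $h^1(C,L-\xi)=h^0(C,K_C+\xi-L)$ and $h^1(C,L)=h^0(C,K_C-L)$, so the inequality $h^1(L-\xi)>h^1(L)$ becomes $h^0(K_C+\xi-L)>h^0(K_C-L)$. I do not expect a genuine obstacle here; the argument is a standard application of the long exact sequence, and the only point requiring a little care is getting the direction of the inequality right, which is guaranteed by the surjectivity of $H^1(L-\xi)\to H^1(L)$ forced by the vanishing of $H^1$ of the skyscraper sheaf $L\otimes\mathcal{O}_\xi$.
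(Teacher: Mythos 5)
Your argument is correct and is exactly the standard reasoning that the paper compresses into its one-line proof (``Immediate from Riemann--Roch''): the long exact sequence of $0\to L-\xi\to L\to L\otimes\mathcal{O}_\xi\to 0$ identifies non-surjectivity of $\operatorname{ev}_{L,\xi}$ with the jump $h^1(L-\xi)>h^1(L)$, and Serre duality gives the second formulation. No gaps; your parenthetical rank computation is precisely the Riemann--Roch form of the same observation.
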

\begin{proof}
	Immediate from Riemann-Roch.
\end{proof}

As a corollary, we can give the following reinterpretation of $(p+1)$-very ampleness for the bundles we are interested in:

\begin{lemma}\label{lemma:p1veryampleness}
	Let $C$ be a smooth curve of genus $g$ and $L$ a nonspecial line bundle of degree $\deg L = 2g+p+1-c$, with $c\geq 1$. Then $L$ is $(p+1)$-very ample if and only if the line bundle $2K_C-L$ is nonspecial and $(c-2)$-very ample.
\end{lemma}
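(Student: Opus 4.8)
The plan is to reduce both sides of the claimed equivalence, via Lemma \ref{kveryample}, to the \emph{same} condition on the line bundle $L-K_C$, so that the statement becomes essentially a tautology once the degrees are matched up. Throughout I work additively with divisor classes and set $F := L-K_C$, which has degree $\deg F = (2g+p+1-c)-(2g-2) = p+3-c$.

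First I would reformulate the $(p+1)$-very ampleness of $L$. Since $L$ is nonspecial, $h^0(K_C-L)=h^1(L)=0$, so Lemma \ref{kveryample} says that $L$ is $(p+1)$-very ample if and only if $h^0(K_C-L+\xi)=0$ for \emph{every} effective divisor $\xi$ of degree $p+2$. The divisor $K_C-L+\xi$ has degree $c-1$, and $h^0(K_C-L+\xi)>0$ exactly when $K_C-L+\xi$ is linearly equivalent to some effective divisor $G$ of degree $c-1$; rearranging $K_C-L+\xi\sim G$ gives $F+G\sim\xi\geq 0$, whence $h^0(F+G)>0$. Conversely any effective $G$ of degree $c-1$ with $h^0(F+G)>0$ produces an effective $\xi\sim F+G$ of degree $p+2$ violating the criterion. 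Hence $L$ is $(p+1)$-very ample if and only if $h^0(F+G)=0$ for every effective divisor $G$ of degree $c-1$.

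Next I would run the analogous computation for $M:=2K_C-L$. Here $K_C-M=L-K_C=F$, so $M$ is nonspecial if and only if $h^0(F)=0$, and $K_C-M+\eta=F+\eta$. The point to get right — and the one place where care is needed — is the degree bookkeeping: $(c-2)$-very ampleness of $M$ is tested on effective divisors $\eta$ of degree $(c-2)+1=c-1$, \emph{not} of degree $c$. With this, Lemma \ref{kveryample} gives that $M$ is $(c-2)$-very ample if and only if $h^0(F+\eta)=h^0(F)$ for every effective $\eta$ of degree $c-1$. Imposing in addition that $M$ be nonspecial collapses this to the single condition $h^0(F+\eta)=0$ for every effective $\eta$ of degree $c-1$, the nonspeciality $h^0(F)=0$ being automatic from it since $h^0(F+\eta)\geq h^0(F)$.

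Finally, the two conditions produced in the last two paragraphs are literally identical after renaming $G$ and $\eta$, which proves the equivalence. I expect no serious obstacle beyond the degree match just mentioned: it is precisely because the residuation $\xi\mapsto K_C-L+\xi$ sends degree $p+2$ to degree $c-1$, and because $(c-2)$-very ampleness is tested on degree $c-1$ divisors, that the two sides coincide. An off-by-one here would break the argument, and as a sanity check the computation recovers the known $c=1$ case, where $c-1=0$ forces $G=\eta=0$ and the statement reduces to ``$L$ is $(p+1)$-very ample if and only if $2K_C-L$ is nonspecial''.
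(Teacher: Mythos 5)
Your proposal is correct and follows essentially the same route as the paper: both apply Lemma \ref{kveryample} to each side, residuate the degree-$(p+2)$ divisors testing $(p+1)$-very ampleness of $L$ into degree-$(c-1)$ divisors, and observe that the resulting condition (which you write as $h^0(L-K_C+G)=0$ and the paper writes Serre-dually as $h^1(2K_C-L-\xi')=0$) is exactly the combination of nonspeciality and $(c-2)$-very ampleness of $2K_C-L$. The degree bookkeeping you flag as the delicate point is handled correctly.
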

\begin{proof}
	By Lemma \ref{kveryample}, $L$ fails to be $(p+1)$-very ample if and only if there exists an effective divisor $\xi\subseteq C$ of degree $p+2$ such that $h^0(C,K_C+\xi-L)>0$. Since $\deg(K_C+\xi-L) = c-1$, we can rephrase this by saying that there are two effective divisors $\xi,\xi'\subseteq C$ of degree $p+2$ and $c-1$ respectively such that $\xi' \in |K_C+\xi -L|$, or, equivalently $\xi \in |L+\xi'-K_C|$. Hence, $L$ is $(p+1)$-very ample if and only if for every effective divisor $\xi'\subseteq C$ of degree $c-1$ we have $h^0(C,L+\xi'-K_C)=0$, which is the same as $h^1(C,2K_C-L-\xi')=0$. Observe that since $\xi$ is effective, the vanishing $h^1(C,2K_C-L-\xi')=0$ implies that $h^1(C,2K_C-L)=0$ as well, and then it is easy to deduce the statement that we want from Lemma \ref{kveryample}.
\end{proof}

\begin{remark}\label{remark:pveryampleness}
	The previous lemma leaves out the case $c=0$, but any line bundle of degree $2g+p+1$ is nonspecial and $(p+1)$-very ample. Instead, in the case $c=1$ the previous statement should be interpreted by saying that $L$ is $(p+1)$-very ample if and only if $2K_C-L$ is nonspecial.
\end{remark}

\subsection{Symmetric products of curves and tautological bundles}
If $C$ is a smooth curve we will denote by $C_n$ its $n$-th symmetric product. This is a smooth and irreducible projective variety of dimension $n$ that parametrizes effective divisors of degree $n$ on $C$. As such, it comes equipped with the universal family:
\begin{equation}
\Xi_n\subseteq  C\times C_{n},  \qquad \Xi_n = \{ (x,\xi) \,|\,  x\in \xi \ \}
\end{equation}
By construction, the fiber of the projection $\Xi_{n}\to C_n$ over a point $\xi\in C_n$ is isomorphic to the subscheme $\xi\subseteq C$.

For any line bundle $B$ on $C$ we can form the corresponding tautological bundle $E_B :\df \pr_{C_n,*}(\pr_C^*B\otimes \mathcal{O}_{\Xi_n})$ on $C_n$: this is a vector bundle of rank $n$ whose fiber at $\xi$ is identified with $H^0(C,B\otimes \mathcal{O}_{\xi})$. Tautological bundles come together with an evaluation map: indeed, pushing forward the exact sequence of sheaves on $C \times C_n$
\begin{equation}
0 \lra \pr_C^*B\otimes \mathcal{O}(-\Xi_n) \lra \pr_C^*B \lra \pr_C^*B\otimes \mathcal{O}_{\Xi_n} \lra 0
\end{equation}
we get an exact sequence on $C_n$:
\begin{equation}\label{evB}
0 \lra M_{E_{B}} \lra H^0(C,B)\otimes \mathcal{O}_{C_n} \overset{\operatorname{ev}_B}{\lra} E_B.
\end{equation}
By construction, the fiber of $\operatorname{ev}_B$ at $\xi \in C_n$ is exactly the evaluation map of \eqref{evLxi}. The map \eqref{evB} induces an isomorphism $H^0(C,B)\cong H^0(C_n,E_B)$ and in particular  it follows  that a line bundle $B$ on $C$ is $(n-1)$-very ample if and only if the tautological bundle $E_B$ on $C_{n}$ is globally generated. Moreover, the evaluation map  defines also a sheaf $M_{E_{B}}$ that we will denote as the kernel sheaf of $E_{B}$. From the definition, we see that $M_{E_B}$ is always a reflexive sheaf and it is moreover a bundle if $E_B$ is globally generated.

Of course, the determinant of the tautological bundle gives a line bundle $N_B=\det E_B$ on $C_n$. Other line bundles on $C_n$ can be constructed as follows: $C_n$ is obtained as the quotient $\pi\colon C^{n} \to C^{n}/\mathfrak{S}_n$, and for any line bundle $L$ on $C$ we can form the line bundle $L^{\boxtimes \, n} = \pr_1^* L \otimes \dots \otimes \pr_n^*L$ on $C^n$. Then we can define
\begin{equation}
S_L :\df \pi_*^{\mathfrak{S}_n}(L^{\boxtimes\, n}).
\end{equation}
One sees that $S_L$ is actually a line bundle, that $\pi^*S_L \cong L^{ \boxtimes n}$ and that the induced map
\begin{equation}
\operatorname{Pic}(C) \to \operatorname{Pic}(C_n) \qquad L \mapsto S_L
\end{equation}
is a homomorphism of groups. There are natural divisors associated with these line bundles: if we fix a point $x\in C$ then we have a natural divisor $S_x := x+C_{n-1} \subseteq C_n$ and the associated line bundle is precisely $S_{\mathcal{O}_C(x)}$. By linearity, we can define the divisor $S_D$ on $C_n$ for every divisor $D$ on $C_n$, and the associated line bundle is precisely $S_{\mathcal{O}_C(D)}$, so that we will use both notations interchangeably.

A distinguished divisor on $C_n$ is the locus $\Delta\subseteq C_n$ consisting of nonreduced divisors. The class of $\Delta$ is divisible by two in $\operatorname{Pic}(C_n)$: indeed, if we denote by $\delta$ the line bundle $\delta := N_{\mathcal{O}_C}^{\vee} = \det E_{\mathcal{O}_C}^{\vee}$, it turns out that $2\delta \cong \mathcal{O}_{C_n}(\Delta)$. More generally, it holds that
\begin{equation}\label{NL}
N_L \cong S_L -\delta.
\end{equation}
Moreover, the canonical bundle on $C$ is given by
\begin{equation}\label{canonical}
K_{C_n} \cong N_{K_C} = S_{K_C} - \delta.
\end{equation}

The cohomology of these line bundles is known:

\begin{lemma}\label{lemma:cohomologysymmetricproduct} 
	Let $L$ be an arbitrary line bundle on $C$.  Then we have isomorphisms
	\begin{equation}
		H^i(C_n,N_L) \cong \wedge^{n-i}H^0(C,L)\otimes \operatorname{Sym}^i H^1(C,L), \qquad H^i(C_n,S_L)\cong \operatorname{Sym}^{n-i} H^0(C,L) \otimes \wedge^{i}H^1(C,L) 
	\end{equation}	
\end{lemma}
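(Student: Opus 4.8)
The plan is to establish the formula for $S_L$ first and then deduce the one for $N_L$ by Serre duality. For $S_L$ I would work directly from the definition $S_L = \pi_*^{\mathfrak{S}_n}(L^{\boxtimes n})$. Since $\pi\colon C^n \to C_n$ is finite, $\pi_*$ is exact, and since we are in characteristic zero the functor of $\mathfrak{S}_n$-invariants is exact as well; combining these gives a natural isomorphism
\begin{equation*}
H^i(C_n, S_L) \cong H^i(C^n, L^{\boxtimes n})^{\mathfrak{S}_n}.
\end{equation*}
This reduces the whole computation to understanding the cohomology of $L^{\boxtimes n}$ on the product, together with the permutation action of $\mathfrak{S}_n$.

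Next I would apply the Künneth formula. As $C$ is a curve, each factor contributes only $H^0(C,L)$ in degree $0$ or $H^1(C,L)$ in degree $1$, so
\begin{equation*}
H^i(C^n, L^{\boxtimes n}) \cong \bigoplus_{\substack{S \subseteq \{1,\dots,n\}\\ |S| = i}} \Big(\bigotimes_{j \in S} H^1(C,L)\Big) \otimes \Big(\bigotimes_{j \notin S} H^0(C,L)\Big),
\end{equation*}
with $\mathfrak{S}_n$ permuting the summands by permuting positions. This direct sum is exactly the representation induced from the summand attached to $S = \{1,\dots,i\}$, whose stabilizer is $\mathfrak{S}_i \times \mathfrak{S}_{n-i}$; since the invariants of an induced representation are the invariants of the inducing one, the space $H^i(C^n, L^{\boxtimes n})^{\mathfrak{S}_n}$ equals the $\mathfrak{S}_i \times \mathfrak{S}_{n-i}$-invariants of $H^1(C,L)^{\otimes i} \otimes H^0(C,L)^{\otimes(n-i)}$.

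The one genuinely delicate point, and the step I expect to be the crux, is keeping track of the signs in the Künneth action. Because the classes in $H^1$ sit in odd cohomological degree, the Koszul sign rule forces a transposition of two $H^1$-factors to act as $-1$ times the naive swap, whereas transpositions of $H^0$-factors act without sign and moving an $H^1$-factor past an $H^0$-factor costs nothing. Hence the invariants are $\wedge^i H^1(C,L) \otimes \operatorname{Sym}^{n-i} H^0(C,L)$, which is precisely the claimed formula for $S_L$.

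Finally I would obtain the formula for $N_L$ from that for $S_L$ via Serre duality on the smooth projective variety $C_n$. Using $K_{C_n} \cong N_{K_C}$ from \eqref{canonical} together with $N_B \cong S_B - \delta$ from \eqref{NL} and the additivity of $L \mapsto S_L$, the classes $\delta$ cancel and one computes $N_{K_C} \otimes N_L^{-1} \cong S_{K_C - L}$, so that
\begin{equation*}
H^i(C_n, N_L)^\vee \cong H^{n-i}(C_n, S_{K_C - L}) \cong \operatorname{Sym}^i H^0(C, K_C - L) \otimes \wedge^{n-i} H^1(C, K_C - L).
\end{equation*}
Applying Serre duality on the curve $C$ to rewrite $H^0(C, K_C - L) \cong H^1(C,L)^\vee$ and $H^1(C,K_C-L) \cong H^0(C,L)^\vee$, and then dualizing once more, yields $H^i(C_n, N_L) \cong \wedge^{n-i} H^0(C,L) \otimes \operatorname{Sym}^i H^1(C,L)$, as desired.
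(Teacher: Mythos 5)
Your proof is correct. Note that the paper does not actually prove this lemma: it simply cites the discussion following Proposition 6.3 of Krug's paper, so you are supplying the argument that the reference encapsulates. Your route is the standard one: identify $H^i(C_n,S_L)$ with $H^i(C^n,L^{\boxtimes n})^{\mathfrak{S}_n}$ using finiteness of $\pi$ and exactness of invariants in characteristic zero, decompose via K\"unneth, and take invariants of the induced representation. You correctly isolate the only delicate point, namely that the $\mathfrak{S}_n$-action on the K\"unneth decomposition carries Koszul signs, so that transpositions of two $H^1$-factors act by the negative of the naive swap; this is exactly what produces $\wedge^i H^1(C,L)\otimes\operatorname{Sym}^{n-i}H^0(C,L)$ rather than a full symmetric power, and it is consistent with the classical formula $H^i(C_n,\mathcal{O}_{C_n})\cong\wedge^i H^1(C,\mathcal{O}_C)$. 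The deduction of the $N_L$ formula from the $S_L$ formula via Serre duality is also clean: the $\delta$'s cancel in $K_{C_n}\otimes N_L^{-1}\cong S_{K_C-L}$ by \eqref{NL}, \eqref{canonical} and additivity of $L\mapsto S_L$, and the curve-level Serre duality swaps $\operatorname{Sym}$ and $\wedge$ in the right way. The only thing you might add for completeness is a sentence justifying that the geometric permutation action really does correspond to the sign-twisted permutation under K\"unneth (e.g.\ via a \v{C}ech or Dolbeault model), but this is a standard fact and your argument is sound as written.
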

\begin{proof}
	These follow from the discussion after \cite[Proposition 6.3]{KrugRemarks2018}.
\end{proof}

\subsection{Addition maps and incidence divisors}\label{section:additionmaps}

Let us fix two integers $m,n\geq 0$. Then we have a natural addition map:
\begin{equation}
\sigma = \sigma_{m,n}\colon C_m\times C_n \lra C_{m+n}, \qquad (D,E) \mapsto D+E.
\end{equation}
This is a finite and flat map, which is ramified precisely along the \emph{incidence divisor}
\begin{equation}
\Xi_{m,n} :\df \{ (D,E) \in C_m\times C_n \,|\, D\cap E \ne \emptyset \}.
\end{equation}
Observe that the incidence divisor $\Xi_{m,n}$ is irreducible, because it can be described as the image of the map 
\begin{equation}
C\times C_{m-1} \times C_{n-1} \lra C_m\times C_n, \qquad  (p,D',E') \mapsto (D'+p,E'+p)
\end{equation}
Moreover, when $m=1$, the incidence divisor $\Xi_{1,n}\subseteq C\times C_n$ coincides with the universal family $\Xi_n$ over $C_n$, and the map $C\times C_{n-1} \to \Xi_{1,n}$ is an isomorphism.

\subsection{Syzygies of curves and symmetric products} 

Let $L$ and $B$ be line bundles  on a smooth curve $C$. On the symmetric product $C_{p+1}$ we can twist the evaluation \eqref{evB} by $N_L$ to get a map
\begin{equation}\label{evBL}
\operatorname{ev}_{B,L}\colon H^0(C,B)\otimes N_L \to E_B\otimes N_L
\end{equation}
which can be used to compute syzygies. More precisely, define the set of Koszul cycles 
\begin{equation}
Z_{p,1}(C,B,L) :\df \operatorname{Ker} \left[ d_{p,1}\colon \wedge^p H^0(C,L) \otimes H^0(C,B+L) \to \wedge^{p-1}H^0(C,L) \otimes H^0(C,B+2L) \right]
\end{equation}
as the kernel of the Koszul differential. Then a result of Voisin, together with an observation of Ein and Lazarsfeld gives the following:

\begin{proposition}[Voisin, Ein-Lazarsfeld]\label{prop:koszulcohomology}
	The Koszul differential $d_{p,1}$ is identified with the multiplication map
	\begin{equation}
	H^0(C_p,N_L) \otimes H^0(C_p,E_{B+L}) \lra H^0(C_p,E_{B+L}\otimes N_L).
	\end{equation}
	Moreover, we also have:
	\begin{align}
		Z_{p,1}(C,B,L) &\cong H^0(C_{p+1},E_B\otimes N_L), \\
		K_{p,1}(C,B,L) &\cong \operatorname{Coker} \left[ H^0(C,B)\otimes H^0(C_{p+1},N_L) \to H^0(C_{p+1},E_B\otimes N_L) \right].
	\end{align}
\end{proposition}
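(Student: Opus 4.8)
The plan is to derive all three assertions from a single left-exact sequence on $C_{p+1}$ obtained by pushing forward along the addition map. Throughout I use Lemma \ref{lemma:cohomologysymmetricproduct}, which gives $H^0(C_n,N_L)\cong\wedge^n H^0(C,L)$, together with the isomorphism $H^0(C,B+L)\cong H^0(C_n,E_{B+L})$ coming from \eqref{evB}. With these identifications the source $\wedge^p H^0(C,L)\otimes H^0(C,B+L)$ of the Koszul differential $d_{p,1}$ becomes exactly the source $H^0(C_p,N_L)\otimes H^0(C_p,E_{B+L})$ of the multiplication map, so the first assertion reduces to matching the two targets and the two maps, while the identity $Z_{p,1}\cong H^0(C_{p+1},E_B\otimes N_L)$ will identify their common kernel.

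First I would compute $H^0(C_{p+1},E_B\otimes N_L)$ geometrically. Since $E_B=\pr_{C_{p+1},*}(\pr_C^*B\otimes\mathcal O_{\Xi_{p+1}})$ and the projection $\Xi_{p+1}\to C_{p+1}$ is finite, the projection formula gives $H^0(C_{p+1},E_B\otimes N_L)\cong H^0(\Xi_{p+1},\pr_C^*B\otimes\pr_{C_{p+1}}^*N_L)$, where the two bundles are restricted to $\Xi_{p+1}$. Using the isomorphism $C\times C_p\xrightarrow{\ \sim\ }\Xi_{p+1}$, $(x,D)\mapsto(x,x+D)$ of Section \ref{section:additionmaps}, under which the projection to $C_{p+1}$ becomes the addition map $\sigma_{1,p}\colon C\times C_p\to C_{p+1}$, this reads
\begin{equation*}
H^0(C_{p+1},E_B\otimes N_L)\cong H^0\big(C\times C_p,\ \pr_C^*B\otimes\sigma_{1,p}^*N_L\big).
\end{equation*}

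The key step, and the main technical point, is the formula
\begin{equation*}
\sigma_{1,p}^*N_L\cong \pr_C^*L\otimes\pr_{C_p}^*N_L\otimes\mathcal O_{C\times C_p}(-\Xi_{1,p}).
\end{equation*}
I would deduce it from \eqref{NL}: the class $S_L$ is additive, so $\sigma_{1,p}^*S_L\cong\pr_C^*L\otimes\pr_{C_p}^*S_L$, while a computation of the pullback of the diagonal shows $\sigma_{1,p}^*\Delta_{p+1}=\pr_{C_p}^*\Delta_p+2\,\Xi_{1,p}$ (the addition map is ramified to order two along the incidence divisor), whence $\sigma_{1,p}^*\delta\cong\pr_{C_p}^*\delta\otimes\mathcal O(\Xi_{1,p})$; subtracting gives the formula. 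The twist by $-\Xi_{1,p}$ is the whole point, as it is exactly what keeps $H^0(C_{p+1},E_B\otimes N_L)$ from being the full tensor product. Substituting, and tensoring the ideal sequence $0\to\mathcal O(-\Xi_{1,p})\to\mathcal O\to\mathcal O_{\Xi_{1,p}}\to 0$ with $\pr_C^*(B+L)\otimes\pr_{C_p}^*N_L$, I take global sections. By Künneth and Lemma \ref{lemma:cohomologysymmetricproduct} the middle term is $\wedge^p H^0(C,L)\otimes H^0(C,B+L)$, and since $\Xi_{1,p}\cong C\times C_{p-1}$ the same projection-formula computation one step down identifies the restriction term with $H^0(C_p,E_{B+L}\otimes N_L)$. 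This yields the fundamental sequence
\begin{equation*}
0\to H^0(C_{p+1},E_B\otimes N_L)\to \wedge^p H^0(C,L)\otimes H^0(C,B+L)\xrightarrow{\ r\ } H^0(C_p,E_{B+L}\otimes N_L).
\end{equation*}

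It then remains to check that the restriction map $r$ coincides, under the identifications above, with both the Koszul differential $d_{p,1}$ and the multiplication map of the statement; this is a naturality computation tracing a decomposable section $\omega\otimes f$ through Künneth and the restriction to the incidence divisor, and is the content of Voisin's interpretation and of \cite[Lemma 1.1]{EinLazarsfeldGonality2015}. Granting it, $Z_{p,1}(C,B,L)=\ker d_{p,1}=\ker r\cong H^0(C_{p+1},E_B\otimes N_L)$, which proves the first two assertions at once. Finally, $K_{p,1}(C,B,L)$ is the homology $Z_{p,1}/\operatorname{im}d_{p+1,1}$; running the same identification one degree higher shows that $d_{p+1,1}\colon\wedge^{p+1}H^0(C,L)\otimes H^0(C,B)\to Z_{p,1}$ is the map $H^0(C,B)\otimes H^0(C_{p+1},N_L)\to H^0(C_{p+1},E_B\otimes N_L)$ induced by the evaluation \eqref{evB} twisted by $N_L$, giving the claimed description of $K_{p,1}$ as a cokernel. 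I expect the obstacle to be twofold: pinning down the incidence twist in $\sigma_{1,p}^*N_L$, and the naturality bookkeeping that matches $r$ with $d_{p,1}$.
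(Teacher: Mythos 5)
Your proposal is correct and is essentially the argument of the cited source: the paper itself offers no proof beyond the reference to \cite[Lemma 1.1]{EinLazarsfeldGonality2015}, and your reconstruction (the isomorphism $\Xi_{p+1}\cong C\times C_p$, the twist $\sigma_{1,p}^*N_L\cong \pr_C^*L\otimes\pr_{C_p}^*N_L\otimes\mathcal{O}(-\Xi_{1,p})$, and the ideal-sheaf sequence of $\Xi_{1,p}$) is exactly the mechanism behind that lemma, and indeed the same formula for $\sigma^*N_L$ that the paper proves independently in Lemma \ref{lemma:globalmayervietoris} and reuses in Lemma \ref{lemma:lemmah1}. Your derivation of the incidence twist from $N_L=S_L-\delta$ and the ramification computation $\sigma_{1,p}^*\Delta_{p+1}=\pr_{C_p}^*\Delta_p+2\,\Xi_{1,p}$ is a valid (slightly different) route to that formula, and the remaining naturality check identifying the restriction map with $d_{p,1}$ is correctly deferred to Voisin's interpretation.
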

\begin{proof}
	See \cite[Lemma 1.1]{EinLazarsfeldGonality2015}.
\end{proof}

\section{A vanishing result for syzygies}\label{section:vanishingresult}



In this section, we prove the Vanishing Criterion from the introduction, in a more general form.

\subsection{Tautological bundles and addition maps}
Let us fix a smooth curve $C$ and two integers $m,n\geq 0$. We will need to know the behavior of tautological bundles under the  addition map $\sigma\colon C_m\times C_n \to C_{m+n}$. If $E_B$ is a tautological bundle on $C_{n+m}$, the fiber of the bundle $\sigma^*E_B$ over $(D,E)$ is given by construction by $H^0(C,B\otimes \mathcal{O}_{D+E})$. Now we observe that for every two effective divisors $D,E$ on $C$ we have a natural exact sequence of sheaves on $C$:
\begin{equation}
0 \lra B\otimes \mathcal{O}_{D\cup E} \lra (B\otimes \mathcal{O}_{D}) \oplus (B\otimes \mathcal{O}_E) \lra B\otimes \mathcal{O}_{D\cap E} \lra 0.
\end{equation}
Together with the canonical surjection $B\otimes \mathcal{O}_{D+E}\lra B\otimes \mathcal{O}_{D\cup E}$, this induces another exact complex 
\begin{equation}\label{mayervietoris}
 B\otimes \mathcal{O}_{D+E} \lra (B\otimes \mathcal{O}_{D}) \oplus (B\otimes \mathcal{O}_E) \lra B\otimes \mathcal{O}_{D\cap E} \lra 0
\end{equation}
As $D,E$ vary in $C_m\times C_n$ we can glue these exact sequences together to obtain the following:
\begin{lemma}\label{lemma:globalmayervietoris}
There is a short exact sequence of sheaves on $C_m\times C_n$:
\begin{equation}\label{globalmayervietoris}
0 \lra \sigma^*E_B \lra \pr_{C_m}^*E_B \oplus \pr_{C_n}^*E_B \lra \mathcal{J}_B^{m,n} \lra 0
\end{equation} 
which globalizes the sequence \eqref{mayervietoris}, meaning that the fiber of $\mathcal{J}_B^{m,n}$ on $(D,E)$ is identified with $B\otimes \mathcal{O}_{D\cap E}$. Moreover $\det \mathcal{J}_B^{m,n} \cong \mathcal{O}_{C_m\times C_n}(\Xi_{m,n})$, so that $\sigma^* N_{B} \cong \pr_{C_m}^*N_B \otimes \pr_{C_n}^*N_{B} \otimes \mathcal{O}(-\Xi_{m,n})$.
\end{lemma}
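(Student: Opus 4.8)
The plan is to realize all four sheaves as pushforwards from the triple product $X := C\times C_m\times C_n$ and to upgrade the fibrewise complex \eqref{mayervietoris} to an honest short exact sequence on $X$. Write $q\colon X\to C$ and $p\colon X\to C_m\times C_n$ for the projections, and let $\mathcal{D},\mathcal{E}\subseteq X$ be the pullbacks of the universal families $\Xi_m\subseteq C\times C_m$ and $\Xi_n\subseteq C\times C_n$ under the two partial projections; these are effective divisors whose fibres over $(D,E)$ are the subschemes $D,E\subseteq C$. By construction $\mathcal{D}+\mathcal{E}=(\operatorname{id}_C\times\sigma)^*\Xi_{m+n}$, and since the universal subschemes are finite over the symmetric products, base change along the finite fibres identifies
\begin{equation*}
p_*(q^*B\otimes\mathcal{O}_{\mathcal{D}})\cong \pr_{C_m}^*E_B,\qquad p_*(q^*B\otimes\mathcal{O}_{\mathcal{E}})\cong \pr_{C_n}^*E_B,\qquad p_*(q^*B\otimes\mathcal{O}_{\mathcal{D}+\mathcal{E}})\cong \sigma^*E_B.
\end{equation*}

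The crucial point is that $\mathcal{D}$ and $\mathcal{E}$ are distinct irreducible divisors on the smooth variety $X$, hence share no common component. Working in the local (regular, therefore factorial) rings of $X$ I may write $I_{\mathcal{D}}=(f)$, $I_{\mathcal{E}}=(g)$ with $f,g$ coprime, so that $I_{\mathcal{D}}\cap I_{\mathcal{E}}=(fg)=I_{\mathcal{D}}I_{\mathcal{E}}$; equivalently $\mathcal{D}\cup\mathcal{E}=\mathcal{D}+\mathcal{E}$ as subschemes of $X$. Consequently the Mayer--Vietoris sequence of the pair $(\mathcal{D},\mathcal{E})$ is a genuine short exact sequence, and twisting by the line bundle $q^*B$ preserves exactness:
\begin{equation*}
0\lra q^*B\otimes\mathcal{O}_{\mathcal{D}+\mathcal{E}}\lra q^*B\otimes(\mathcal{O}_{\mathcal{D}}\oplus\mathcal{O}_{\mathcal{E}})\lra q^*B\otimes\mathcal{O}_{\mathcal{D}\cap\mathcal{E}}\lra 0.
\end{equation*}
This globalizes \eqref{mayervietoris}: left exactness fails fibrewise precisely because restricting to a point $(D,E)\in\Xi_{m,n}$ introduces a $\operatorname{Tor}_1$ term, which is exactly the fibrewise kernel $B\otimes\mathcal{O}_{D\cap E}$. (Equivalently, one may push forward first and then note that $\sigma^*E_B$ is locally free, hence torsion free, and that the map to $\pr_{C_m}^*E_B\oplus\pr_{C_n}^*E_B$ is an isomorphism over the dense open locus $\{D\cap E=\emptyset\}$, so its kernel, being a torsion subsheaf of a torsion-free sheaf, vanishes.)

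Next I would push the sequence forward along $p$. Each of the three sheaves is supported on a closed subscheme ($\mathcal{D}+\mathcal{E}$, $\mathcal{D}\cup\mathcal{E}$, resp.\ $\mathcal{D}\cap\mathcal{E}$) that is finite over $C_m\times C_n$, so all higher direct images vanish and $p_*$ is exact on them. Setting $\mathcal{J}_B^{m,n}:=p_*(q^*B\otimes\mathcal{O}_{\mathcal{D}\cap\mathcal{E}})$ and using the identifications above yields the asserted short exact sequence
\begin{equation*}
0\lra \sigma^*E_B\lra \pr_{C_m}^*E_B\oplus\pr_{C_n}^*E_B\lra \mathcal{J}_B^{m,n}\lra 0,
\end{equation*}
and base change identifies the fibre of $\mathcal{J}_B^{m,n}$ at $(D,E)$ with $H^0(C,B\otimes\mathcal{O}_{D\cap E})=B\otimes\mathcal{O}_{D\cap E}$, as claimed.

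For the determinant, the short exact sequence exhibits $\mathcal{J}_B^{m,n}$ as the cokernel of an injection of vector bundles, so $\det\mathcal{J}_B^{m,n}\cong \pr_{C_m}^*N_B\otimes\pr_{C_n}^*N_B\otimes\sigma^*N_B^{\vee}$, and it remains to show this equals $\mathcal{O}_{C_m\times C_n}(\Xi_{m,n})$. The sheaf $\mathcal{J}_B^{m,n}$ is torsion, supported on the irreducible divisor $\Xi_{m,n}$, so $\det\mathcal{J}_B^{m,n}\cong\mathcal{O}(\ell\,\Xi_{m,n})$ for some $\ell\geq 0$; to pin down $\ell=1$ I would localize at the generic point $\eta$ of $\Xi_{m,n}$, where $\mathcal{O}_{C_m\times C_n,\eta}$ is a discrete valuation ring. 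Using the description of $\Xi_{m,n}$ as the image of $(p,D',E')\mapsto(D'+p,E'+p)$ from Section~\ref{section:additionmaps}, a general point of $\Xi_{m,n}$ has $D\cap E$ equal to a single reduced point, so the stalk of $\mathcal{J}_B^{m,n}$ at $\eta$ has length one and $\ell=1$. Rearranging gives $\sigma^*N_B\cong\pr_{C_m}^*N_B\otimes\pr_{C_n}^*N_B\otimes\mathcal{O}(-\Xi_{m,n})$. The main obstacle throughout is the left exactness of the globalized sequence, which is resolved by the no-common-component observation that makes scheme sum and scheme union coincide on $X$.
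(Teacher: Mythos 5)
Your proof is correct and follows essentially the same route as the paper: both work on $C\times C_m\times C_n$, realize the three sheaves as pushforwards along the projection (which is finite on the relevant supports, so exactness and fibrewise base change are automatic), identify $\sigma^*E_B$ with the pushforward of $q^*B$ restricted to $(\operatorname{id}\times\sigma)^*\Xi_{1,m+n}$ by flat base change, and obtain $\det\mathcal{J}_B^{m,n}\cong\mathcal{O}(\Xi_{m,n})$ from the fact that $\mathcal{J}_B^{m,n}$ has generic rank one along the reduced irreducible divisor $\Xi_{m,n}$. Your explicit observation that $\mathcal{D}\cup\mathcal{E}=\mathcal{D}+\mathcal{E}$ because the two universal divisors share no component in the factorial local rings of the smooth triple product is precisely the content of the paper's claim that $(\operatorname{id}\times\sigma)^*\mathcal{O}_{\Xi_{1,m+n}}\cong\mathcal{O}_{\Xi_{1,m}\cup\Xi_{1,n}}$, so the two arguments coincide.
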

\begin{proof}
	Consider the two universal varieties $\Xi_{1,m}\subseteq C\times C_m$ and $\Xi_{1,n} \subseteq C\times C_n$ and their pullbacks to $C\times C_m\times C_n$, that we keep denoting with the same symbols. On $C\times C_m\times C_n$ there is an exact sequence
	\begin{equation}
	0 \lra \pr_C^*B\otimes \mathcal{O}_{\Xi_{m,1}\cup \Xi_{n,1}} \lra (\pr_C^*B\otimes \mathcal{O}_{\Xi_{m,1}}) \oplus (\pr_C^*B\otimes \mathcal{O}_{\Xi_{n,1}}) \lra B\otimes \mathcal{O}_{\Xi_{m,1}\cap \Xi_{n,1}} \lra 0
	\end{equation}
	Pushing forward to $C_m\times C_n$,this yields the exact sequence
	\begin{equation}
	0 \lra \pr_{C_m\times C_n,*}(\pr_C^*B\otimes \mathcal{O}_{\Xi_{m,1}\cup \Xi_{n,1}}) \lra \pr_{C_m}^*E_B \oplus \pr_{C_n}^*E_B \lra \mathcal{J}_B^{m,n} \lra 0
	\end{equation} 
	where $\mathcal{J}_B^{m,n} := \pr_{C_m\times C_n,*}(B\otimes \mathcal{O}_{\Xi_{m,1}\cap \Xi_{n,1}})$. Observe that this second sequence is exact on the right because the restriction of $\pr_{C_m\times C_n}$ to $\Xi_{1,m}\cup \Xi_{1,n}$ is a finite map, so that $R^1\pr_{C_m\times C_n,*}(\pr_C^*B\otimes \mathcal{O}_{\Xi_{m,1}\cup \Xi_{n,1}})=0$. 
	Now we need to prove that $\sigma^*E_B \cong  \pr_{C_m\times C_n,*}(\pr_C^*B\otimes \mathcal{O}_{\Xi_{m,1}\cup \Xi_{n,1}})$: consider the cartesian diagram
	\begin{equation}
	\begin{tikzcd}
	C\times C_m \times C_n \arrow{r}{\operatorname{id}\times \sigma}\arrow{d}{\pr_{C_m\times C_n}} & C\times C_{m+n} \arrow{d}{\pr_{C_{m+n}}} \\
	C_m\times C_n \arrow{r}{\sigma}  & C_{m+n} 
	\end{tikzcd}
	\end{equation}
	and observe that by flat base change we get isomorphisms, $\sigma^*E_B \cong \sigma^* \pr_{C_m\times C_n,*} (\pr_C^*B\otimes \mathcal{O}_{\Xi_{1,m+n}}) \cong \pr_{C_m\times C_n,*}((\operatorname{id}\times \sigma)^*(\pr_C^*B\otimes \mathcal{O}_{\Xi_{1,m+n}})) \cong \pr_{C_m\times C_n,*}(\pr_C^*B\otimes(\operatorname{id}\times \sigma)^*( \mathcal{O}_{\Xi_{1,m+n}}))$. At this point, it is easy to see that $(\operatorname{id}\times \sigma)^*\mathcal{O}_{\Xi_{1,m+n}} \cong \mathcal{O}_{\Xi_{1,m}\cup \Xi_{1,n}}$, which gives us what we want. As a consequence, we see that \eqref{globalmayervietoris} is truly the globalization of \eqref{mayervietoris}: indeed, taking fibers in \eqref{globalmayervietoris}, we get the sequence
	\begin{equation}
	 B\otimes \mathcal{O}_{D+E} \lra B\otimes \mathcal{O}_D \oplus B\otimes \mathcal{O}_E \lra {\mathcal{J}^{m,n}_B} \otimes \kappa(D,E) \lra 0
	\end{equation}
	and \eqref{mayervietoris} shows at that  ${\mathcal{J}^{m,n}_B} \otimes \kappa(D,E) \cong B\otimes \mathcal{O}_{D\cap E}$.
	The next step in the proof is to show that $\det \mathcal{J}^{m,n}_B \cong \mathcal{O}(\Xi_{m,n})$: however, it is straightforward to check that $\mathcal{J}^{m,n}_B$ is a coherent sheaf of rank one on the irreducible and reduced divisor $\Xi_{m,n}\subseteq C_m\times C_n$, so that we can apply Lemma \ref{lemma:determinant} below.  The final statement about $\sigma^*N_B$ follows because $\sigma^*N_B \cong \sigma^* \det E_B \cong \det \sigma^*E_B$, and we can compute this via the exact sequence \eqref{globalmayervietoris}.
\end{proof}

\begin{lemma}\label{lemma:determinant}
	Let $X$ be a smooth variety, $D\subseteq X$ a reduced divisor and $\mathcal{F}$ a coherent sheaf on $D$ of rank $r$ at each irreducible component of $D$. The determinant of $\mathcal{F}$ as a sheaf on $X$ is given by $\det \mathcal{F} \cong \mathcal{O}_X(rD)$. 
\end{lemma}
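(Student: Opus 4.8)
The plan is to compute the class of $\det \mathcal{F}$ in $\operatorname{Pic}(X)$ by working at the generic points of the irreducible components of $D$. Since $X$ is smooth, every coherent sheaf admits, locally, a finite resolution by locally free sheaves, so the determinant $\det \mathcal{F}$ is a well-defined line bundle on $X$, with first Chern class $c_1(\det\mathcal{F}) = \sum_i (-1)^i c_1(\mathcal{E}_i)$ computed from any local resolution $\mathcal{E}_\bullet \to \mathcal{F}$. Smoothness also gives $\operatorname{Pic}(X) \cong \operatorname{Cl}(X)$, so it suffices to show that the divisor class of $\det \mathcal{F}$ equals $r[D] = \sum_i r[D_i]$, where the $D_i$ are the irreducible components of $D$.

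First I would invoke the standard fact that, for a coherent sheaf $\mathcal{F}$ on a smooth variety $X$ supported in codimension at least one, its determinant has divisor class
\[
c_1(\det \mathcal{F}) = \sum_{V} \operatorname{length}_{\mathcal{O}_{X,V}}(\mathcal{F}_{V})\,[V],
\]
the sum running over prime divisors $V\subseteq X$. As $\mathcal{F}$ is supported on $D$, only the $V = D_i$ contribute, and it remains to compute the length at the generic point $\eta_i$ of each $D_i$. Here the reducedness of $D$ enters: since $\mathcal{O}_{X,\eta_i}$ is a DVR and the ideal of $D$ at $\eta_i$ is radical and nonzero, it must be the maximal ideal, so $\mathcal{O}_{D,\eta_i} = \kappa(\eta_i)$ is a field and $\mathcal{F}_{\eta_i}$ is a $\kappa(\eta_i)$-vector space of dimension $r$ by hypothesis. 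As an $\mathcal{O}_{X,\eta_i}$-module this has length exactly $r$, whence $c_1(\det\mathcal{F}) = \sum_i r[D_i] = r[D]$ and therefore $\det \mathcal{F} \cong \mathcal{O}_X(rD)$.

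Alternatively, and more self-containedly, I would reduce to codimension one. Since $X$ is smooth, hence normal, restriction to the complement of a closed set of codimension at least two induces an isomorphism on Picard groups; so I may delete $\operatorname{Sing}(D)$ together with the locus where $\mathcal{F}$ fails to be locally free on $D$ (both of codimension at least two in $X$) and assume that $D$ is a smooth divisor and $\mathcal{F}$ is locally free of rank $r$ on $D$. Locally on $X$ one then chooses a locally free extension $\widetilde{\mathcal{F}}$ and writes the two-term resolution $0 \to \mathcal{O}_X(-D)\otimes \widetilde{\mathcal{F}} \to \widetilde{\mathcal{F}} \to \mathcal{F} \to 0$; taking determinants, the factor $\det\widetilde{\mathcal{F}}$ cancels against its inverse and one is left with $\det\mathcal{F} \cong \mathcal{O}_X(rD)$, and these local identifications are canonical enough to glue.

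The main obstacle, and really the only delicate point, is the bookkeeping that translates the hypothesis ``rank $r$ at each component of $D$'' into ``length $r$ over $\mathcal{O}_{X,\eta_i}$'', which is exactly where the reducedness of $D$ is indispensable: for a non-reduced structure the local ideal would be a proper power of the maximal ideal and the length would pick up the multiplicity of that component. Everything else is a routine application of the determinant formalism together with the identification $\operatorname{Pic}(X)\cong \operatorname{Cl}(X)$ for smooth $X$.
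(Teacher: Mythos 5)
Your proposal is correct. Your second, ``self-contained'' argument is essentially the paper's own proof: the paper also passes to the complement of a codimension-two set where $\mathcal{F}$ becomes (globally) free of rank $r$ on $D$, and then reads off the determinant from the resolution $0\to\mathcal{O}_X(-D)^{\oplus r}\to\mathcal{O}_X^{\oplus r}\to\mathcal{O}_D^{\oplus r}\to 0$; the only caveat in your phrasing is that ``local identifications glue'' is not by itself a proof that two line bundles agree (all line bundles are locally isomorphic), so one should, as the paper does, shrink until the two-term resolution is global on the large open set, after which no gluing is needed. Your first argument, via the cycle-theoretic formula $c_1(\det\mathcal{F})=\sum_V \operatorname{length}_{\mathcal{O}_{X,V}}(\mathcal{F}_V)\,[V]$ together with $\operatorname{Pic}(X)\cong\operatorname{Cl}(X)$, is a genuinely different and slightly more general route: it quotes a stronger standard fact about determinants of torsion sheaves but in exchange makes completely transparent where reducedness enters (the local ring of $D$ at each generic point is a field, so length equals rank, with no multiplicity factor), and it would handle non-reduced $D$ with the obvious correction. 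Both routes are valid; the paper's is more elementary and self-contained, yours is shorter once the length formula is granted.
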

\begin{proof}
 This is a standard result, but we include a proof here for the sake of reference. Since $X$ is smooth and both $\det\mathcal{F}$ and $\mathcal{O}_X(rD)$ are line bundles, it is enough to show that they are isomorphic outside any closed set of codimension at least two. Thus, if we consider the set of points in $D$ where the rank of $\mathcal{F}$ jumps, we can reduce to the case where $\mathcal{F}$ is locally free of rank $r$ on $D$. Furthermore, if we consider an open subset $U\subseteq D$ such that $\mathcal{F}_{|U}$ is free, the complement $Z=D\setminus U$ has codimension at least two in $X$, so we can assume that $\mathcal{F}$ is free, that is $\mathcal{F} \cong \mathcal{O}_D^{\oplus r}$. At this point, we have the exact sequence of sheaves on $X$:
 \begin{equation}
 0 \lra \mathcal{O}_X(-D)^{\oplus r} \lra \mathcal{O}_X^{\oplus r} \lra \mathcal{O}_D^{\oplus r} \lra 0
 \end{equation} 
 which proves that $\det \mathcal{O}_D^{\oplus r} \cong \det \mathcal{O}_X^{\oplus r}\otimes (\det\mathcal{O}_X(-D)^{\oplus r})^{\vee} \cong \mathcal{O}_X(rD)$.
\end{proof}

Lemma \ref{lemma:globalmayervietoris} readily implies the general vanishing result that we will apply later to syzygies. 

\begin{lemma}\label{lemma:basicvanishing}
	Let $B$ be a line bundle on $C$ and $\mathscr{L}$ be an arbitrary line bundle on $C_{m+n}$. Consider the addition map $\sigma\colon C_m\times C_n \lra C_{m+n}$ and suppose that $H^0(C_m\times C_n,\pr_{C_n}^*E_B\otimes \sigma^*\mathscr{L}) = 0$. Then $H^0(C_{m+n},E_B\otimes \mathscr{L})=0$ as well.
\end{lemma}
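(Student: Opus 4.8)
The plan is to realize $H^0(C_{m+n},E_B\otimes\mathscr{L})$ as a subspace of $H^0(C_m\times C_n,\pr_{C_n}^*E_B\otimes\sigma^*\mathscr{L})$, so that the hypothesis forces it to vanish. The natural map to use is the pullback $\sigma^*$ followed by the second projection supplied by Lemma \ref{lemma:globalmayervietoris}: twisting the exact sequence \eqref{globalmayervietoris} by $\sigma^*\mathscr{L}$ and projecting onto the $\pr_{C_n}$-summand gives a sheaf morphism $\sigma^*E_B\otimes\sigma^*\mathscr{L}\to \pr_{C_n}^*E_B\otimes\sigma^*\mathscr{L}$, and composing with $\sigma^*$ on global sections yields a linear map
\[
\phi\colon H^0(C_{m+n},E_B\otimes\mathscr{L}) \lra H^0(C_m\times C_n,\pr_{C_n}^*E_B\otimes\sigma^*\mathscr{L}).
\]
Since the target vanishes by assumption, it suffices to prove that $\phi$ is injective.

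First I would record that the pullback $\sigma^*\colon H^0(C_{m+n},E_B\otimes\mathscr{L})\to H^0(C_m\times C_n,\sigma^*(E_B\otimes\mathscr{L}))$ is injective: $\sigma$ is finite, flat and surjective, $C_m\times C_n$ is integral, and $E_B\otimes\mathscr{L}$ is locally free, so a section whose pullback vanishes must already vanish on the dense image $\sigma(C_m\times C_n)=C_{m+n}$. Thus the content is to show that no nonzero $\sigma^*s$ is killed by the projection onto $\pr_{C_n}^*E_B$.

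The crucial point is a fiberwise analysis over the dense open locus of reduced divisors. By Lemma \ref{lemma:globalmayervietoris} the projection in \eqref{globalmayervietoris} globalizes \eqref{mayervietoris}, so on fibers the map $\sigma^*E_B\to\pr_{C_n}^*E_B$ sends $s(D+E)\in H^0(C,B\otimes\mathcal{O}_{D+E})$ (tensored with the line $\mathscr{L}_{D+E}$) to its restriction $s(D+E)|_E\in H^0(C,B\otimes\mathcal{O}_E)$. Hence $\phi(s)=0$ means $s(D+E)|_E=0$ for all $(D,E)\in C_m\times C_n$. Now fix a reduced divisor $\xi=p_1+\dots+p_{m+n}$; then $H^0(C,B\otimes\mathcal{O}_\xi)\cong\bigoplus_i B_{p_i}$, and for any $n$-element subset $E\subseteq\{p_1,\dots,p_{m+n}\}$ with complementary divisor $D$ of degree $m$ the points of $D$ and $E$ are disjoint, so $(D,E)$ is a genuine splitting and the vanishing $s(\xi)|_E=0$ annihilates exactly the components of $s(\xi)$ supported on $E$. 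Letting $E$ range over all $n$-subsets (here one uses $n\geq 1$, the only case of interest; for $n=0$ the statement is empty) every point $p_i$ lies in some such $E$, so every component of $s(\xi)$ vanishes and $s(\xi)=0$. Therefore $s$ vanishes on the reduced locus, which is dense in the integral variety $C_{m+n}$, and since $E_B\otimes\mathscr{L}$ is locally free this forces $s=0$. This proves $\phi$ injective, and the hypothesis then gives $H^0(C_{m+n},E_B\otimes\mathscr{L})=0$.

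The main obstacle is precisely this injectivity of $\phi$: twisting \eqref{globalmayervietoris} and taking cohomology only embeds $H^0(\sigma^*E_B\otimes\sigma^*\mathscr{L})$ into the direct sum $H^0(\pr_{C_m}^*E_B\otimes\sigma^*\mathscr{L})\oplus H^0(\pr_{C_n}^*E_B\otimes\sigma^*\mathscr{L})$, and since the hypothesis controls only the second summand, the left-exactness of the sequence by itself is not enough. The asymmetry must be broken by the fiberwise covering argument above, which genuinely exploits that a reduced divisor may be partitioned in every possible way, so that each of its points is eventually tested by the restriction to some $n$-subset.
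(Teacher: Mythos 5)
Your proof is correct and follows essentially the same route as the paper: both pull back along $\sigma$, use the projection onto the $\pr_{C_n}$-summand coming from Lemma \ref{lemma:globalmayervietoris}, and conclude by the fiberwise observation that every point of a reduced divisor $\xi$ lies in the degree-$n$ part $E$ of some splitting $\xi = D+E$. The only cosmetic difference is that you package the argument as injectivity of the composite map $\phi$, whereas the paper shows directly that each component $s_E$ of $s(\xi)$ dies because the evaluation $s\mapsto s_E$ factors through the vanishing group.
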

\begin{proof}
	Let $s\in H^0(C_{n+m},E_B\otimes \mathscr{L})$ be a global section. We want to show that $s$ vanishes at a general point $\xi\in C_{n+m}$. We can write this point as $\xi = x_1+\dots+x_{m+n}$ where the $x_i$ are pairwise distinct: then we have a canonical decomposition of the fiber
	\begin{equation}
	(E_B\otimes \mathscr{L})\otimes \kappa(\xi) \cong \bigoplus_{i=1}^{m+n} H^0(C,B\otimes \mathcal{O}_{x_i}) \otimes \mathscr{L}(\xi)
	\end{equation}
	and the evaluation of $s$ at $\xi$ decomposes accordingly as $s(\xi) = s_1+\dots+s_{n+m}$. Thus, we want to prove that $s_i=0$ for every $i$: for this, let $\xi = D+E$ be an arbitrary decomposition into two divisors $D\in C_m$ and $E\in C_n$, then we have a corresponding decomposition $s(\xi) = s_D+s_E$, and  it is enough to prove that $s_E=0$. In other words, we have an evaluation map
	\begin{equation}\label{eq:evmapsE}
	H^0(C_{n+m},E_B\otimes \mathscr{L}) \lra H^0(C,B\otimes \mathcal{O}_E)\otimes \mathscr{L}(\xi), \qquad s \mapsto s_E
	\end{equation} 
	and we want to show that this is identically zero. To do so,  we can look at this  through the addition map. Indeed, consider the pullback $\sigma^*(s)$ as a section of $\sigma^*(E_B\otimes \mathscr{L})$ on $C_n\times C_m$: the sequence of Lemma \ref{lemma:globalmayervietoris} gives an embedding
	\begin{equation}
	\sigma^*(E_B\otimes \mathscr{L}) \hookrightarrow (\pr_{C_n}^*E_B\otimes \sigma^*\mathscr{L}) \oplus (\pr_{C_m}^*E_B\otimes \sigma^*\mathscr{L}) 
	\end{equation}
	which corresponds precisely to the decomposition $s(\xi)=s_D+s_E$ when applied to the evaluation of $\sigma^*(s)$ at $(D,E)$. Hence, the evaluation map \eqref{eq:evmapsE}
	factors through
	\begin{equation}
	H^0(C_{n+m},E_B\otimes \mathscr{L}) \overset{\sigma^*}{\hookrightarrow} H^0(C_n\times C_m,\sigma^*(E_B\otimes \mathscr{L})) \lra H^0(C_n\times C_m,\pr_{C_n}^*E_B\otimes \sigma^*\mathscr{L}) 
	\end{equation}
	and since $H^0(C_n\times C_m,\pr_{C_n}^*E_B\otimes \sigma^*\mathscr{L})=0$ by assumption, it follows that \eqref{eq:evmapsE} is identically zero.
\end{proof}

For computing syzygies, we look at the global sections of $E_B\otimes N_L$: in this situation, Lemma \ref{lemma:basicvanishing} translates into the following.

\begin{corollary}[Global Vanishing Criterion]\label{cor:basicvanishingsyzygies}  
Let $B$ and $L$ be line bundles on a curve $C$ and assume that 
\begin{equation}
H^0(C_m\times C_n, \pr_{C_m}^*N_L \otimes \operatorname{pr}_{C_n}^*(E_B\otimes N_L)\otimes  \mathcal{O}(-\Xi_{n,m})) = 0.
\end{equation} 
Then $H^0(C_{n+m},E_B\otimes N_L)=0$ as well. In particular, we have the vanishing of Koszul cohomology $K_{n+m-1,1}(C,B,L)=0$.
\end{corollary}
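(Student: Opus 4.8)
The plan is to deduce this directly from Lemma \ref{lemma:basicvanishing}, specialized to the line bundle $\mathscr{L} = N_L$, using the computation of $\sigma^* N_L$ supplied by Lemma \ref{lemma:globalmayervietoris}. No genuinely new ideas are needed; the work is just to match the hypothesis of the Corollary to that of the Lemma.

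First I would recall the last formula of Lemma \ref{lemma:globalmayervietoris}, namely
\[
\sigma^* N_L \cong \pr_{C_m}^* N_L \otimes \pr_{C_n}^* N_L \otimes \mathcal{O}(-\Xi_{m,n}).
\]
Tensoring both sides by $\pr_{C_n}^* E_B$ and regrouping the two factors pulled back from $C_n$, this gives
\[
\pr_{C_n}^* E_B \otimes \sigma^* N_L \cong \pr_{C_m}^* N_L \otimes \pr_{C_n}^*(E_B \otimes N_L) \otimes \mathcal{O}(-\Xi_{m,n}),
\]
which is precisely the twisted bundle whose global sections are assumed to vanish. Here I would remark that $\Xi_{m,n}$ and $\Xi_{n,m}$ denote the same incidence divisor on $C_m\times C_n$ up to the canonical swap isomorphism, so the two pieces of notation agree and the hypotheses line up exactly.

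Next I would apply Lemma \ref{lemma:basicvanishing} with $\mathscr{L} = N_L$: by the identification above, its standing assumption $H^0(C_m\times C_n,\pr_{C_n}^* E_B \otimes \sigma^* \mathscr{L}) = 0$ is literally the vanishing assumed in the Corollary, and the Lemma then yields $H^0(C_{m+n}, E_B \otimes N_L) = 0$. Finally, for the Koszul-cohomology statement I would invoke Proposition \ref{prop:koszulcohomology} with $p = n+m-1$, so that $p+1 = n+m$: that proposition realizes $K_{p,1}(C,B,L)$ as the cokernel of a map whose target is $H^0(C_{p+1}, E_B \otimes N_L)$. Since this target has just been shown to vanish, its cokernel is zero, giving $K_{n+m-1,1}(C,B,L) = 0$.

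I expect no real obstacle, since the statement merely repackages the already-established Lemma \ref{lemma:basicvanishing} and the determinant formula of Lemma \ref{lemma:globalmayervietoris}. The only points demanding care are the bookkeeping in the tensor-factor regrouping and the harmless identification of $\Xi_{m,n}$ with $\Xi_{n,m}$; the transition to Koszul cohomology is immediate once the target group is known to vanish.
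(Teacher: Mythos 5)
Your proposal is correct and follows exactly the paper's own (one-line) proof: apply Lemma \ref{lemma:basicvanishing} with $\mathscr{L}=N_L$ after identifying $\pr_{C_n}^*E_B\otimes\sigma^*N_L$ with the twisted bundle in the hypothesis via the formula for $\sigma^*N_L$ in Lemma \ref{lemma:globalmayervietoris}. Your added justification of the Koszul-cohomology consequence via the cokernel description in Proposition \ref{prop:koszulcohomology} is also the intended (and correct) reading.
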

\begin{proof}
	This follows immediately from Lemma \ref{lemma:basicvanishing} and the computation of $\sigma^*N_L$ in Lemma \ref{lemma:globalmayervietoris}.
\end{proof}

A natural way to compute the global sections of $\pr_{C_m}^*N_L \otimes \operatorname{pr}_{C_n}^*(E_B\otimes N_L)\otimes  \mathcal{O}(-\Xi_{n,m})$ is to pushforward to $C_m$: by the projection formula, this yields the sheaves
\begin{align}
\mathscr{F} & :=\pr_{C_m,*}(\pr_{C_m}^*N_L \otimes \operatorname{pr}_{C_n}^*(E_B\otimes N_L)\otimes  \mathcal{O}(-\Xi_{m,n})) \cong N_L \otimes \mathscr{G}, \label{eq:sheafF} \\
\mathscr{G} & := \pr_{C_m,*}( \operatorname{pr}_{C_n}^*(E_B\otimes N_L)\otimes  \mathcal{O}(-\Xi_{m,n})). \label{eq:sheafG}
\end{align}
One checks easily that the restriction of $\operatorname{pr}_{C_n}^*(E_B\otimes N_L)\otimes  \mathcal{O}(-\Xi_{m,n})$ to a fiber $\{D\}\times C_n$ is given by 
\begin{equation}
E_B\otimes (N_L-S_D) \cong E_B\otimes N_{L-D}.
\end{equation} 
Hence, $\mathscr{G}$ bundles together the spaces $H^0(C_n,E_B\otimes N_{L-D})$. This remark yields the 
the following result, which is essentially the Vanishing Criterion from the Introduction.

\begin{corollary}[Vanishing Criterion]\label{cor:robcorollary}
	Let $B,L$ be line bundles on $C$ and assume that for an effective divisor $D\in C_{m}$ we have
	\begin{equation}
	H^0(C_{n},E_B\otimes N_{L-D}) = 0.
	\end{equation}
	Then $H^0(C_{n+m},E_B\otimes N_L)=0$ as well. In particular, we have the vanishing of Koszul cohomology $K_{n+m-1,1}(C,B,L)=0$. 
\end{corollary}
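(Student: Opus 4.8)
The plan is to deduce the statement from the Global Vanishing Criterion (Corollary \ref{cor:basicvanishingsyzygies}), whose hypothesis is the vanishing of $H^0(C_m\times C_n,\mathscr{W})$, where I abbreviate $\mathscr{W}:=\pr_{C_m}^*N_L\otimes\operatorname{pr}_{C_n}^*(E_B\otimes N_L)\otimes\mathcal{O}(-\Xi_{n,m})$. The crucial observation, already recorded in the discussion preceding the statement, is that the restriction of $\mathscr{W}$ to a fiber $\{D\}\times C_n$ of the first projection is, up to tensoring with the one-dimensional space $N_L\otimes\kappa(D)$, isomorphic to $E_B\otimes N_{L-D}$. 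Thus the hypothesis $H^0(C_n,E_B\otimes N_{L-D})=0$ says precisely that $\mathscr{W}$ has no sections on this one fiber, and the entire task is to propagate this from a single fiber to the whole product.

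The first step is to upgrade the hypothesis from the given divisor to a general one. For this I would invoke upper semicontinuity: the sheaf $\mathcal{E}:=\operatorname{pr}_{C_n}^*(E_B\otimes N_L)\otimes\mathcal{O}(-\Xi_{n,m})$ is locally free, hence flat over $C_m$, and $\pr_{C_m}\colon C_m\times C_n\to C_m$ is proper, so the function $D\mapsto h^0(C_n,E_B\otimes N_{L-D})$ is upper semicontinuous on $C_m$. Since it vanishes at the given $D$ and $C_m$ is irreducible, it vanishes on a dense open subset $U\subseteq C_m$.

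The second step is a torsion-freeness argument. Let $s\in H^0(C_m\times C_n,\mathscr{W})$ be a global section. For each $D\in U$ the restriction of $s$ to $\{D\}\times C_n$ lands in $H^0(\{D\}\times C_n,\mathscr{W}|_{\{D\}\times C_n})\cong H^0(C_n,E_B\otimes N_{L-D})=0$, so all the fibre values $s(D,E)$ vanish for $D\in U$; that is, $s$ vanishes on the dense open subset $U\times C_n$. As $\mathscr{W}$ is locally free and $C_m\times C_n$ is integral, it is torsion-free, so a section vanishing on a dense open subset is identically zero, giving $s=0$ and $H^0(C_m\times C_n,\mathscr{W})=0$. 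Corollary \ref{cor:basicvanishingsyzygies} then yields $H^0(C_{n+m},E_B\otimes N_L)=0$, and the Koszul statement follows because, by Proposition \ref{prop:koszulcohomology} with $p=n+m-1$, the group $K_{n+m-1,1}(C,B,L)$ is a quotient of $H^0(C_{n+m},E_B\otimes N_L)$.

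The step I expect to need the most care is exactly this passage from one fiber to the generic one: vanishing of $H^0$ on a single $\{D\}\times C_n$ is strictly weaker than global vanishing, and only the combination of semicontinuity (which promotes it to an open, hence dense, locus) with the torsion-freeness of $\mathscr{W}$ bridges the gap. Equivalently, one may phrase the argument through the pushforward $\mathscr{G}$ of \eqref{eq:sheafG}: the hypothesis forces $\mathscr{G}$, and thus $\mathscr{F}=N_L\otimes\mathscr{G}$ of \eqref{eq:sheafF}, to be generically zero, i.e.\ torsion, and the torsion-freeness of $\mathcal{E}$ shows that such a pushforward can carry no nonzero global section, so $H^0(C_m,\mathscr{F})=H^0(C_m\times C_n,\mathscr{W})=0$. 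The only genuinely routine points left to verify are the fibre-restriction computation for $\mathscr{W}$ and the matching of the two incidence-divisor conventions $\Xi_{m,n}$ and $\Xi_{n,m}$ under the factor swap.
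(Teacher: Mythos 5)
Your proposal is correct and follows essentially the same route as the paper: both reduce to Corollary \ref{cor:basicvanishingsyzygies} via the identification of the restriction of $\operatorname{pr}_{C_n}^*(E_B\otimes N_L)\otimes\mathcal{O}(-\Xi_{m,n})$ to $\{D\}\times C_n$ with $E_B\otimes N_{L-D}$, and then kill all sections by a torsion-freeness argument. The paper phrases this through the vanishing of the fiber of the pushforward $\mathscr{G}$ at $D$ (base change plus torsion-freeness of $\mathscr{G}$) rather than your semicontinuity-on-the-product formulation, but as you note yourself these are the same argument.
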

\begin{proof}
We want to apply Corollary \ref{cor:basicvanishingsyzygies}, and, with the notation of the previous discussion, we need to show that $H^0(C_m,\mathscr{F}) = H^0(C_m,\mathscr{G}\otimes N_L) = 0$. Observe that both $\mathscr{F}$ and $\mathscr{G}$ are torsion-free sheaves, since they are pushforwards of locally free sheaves. Now we look at the point $D\in C_n$: by assumption, we have $H^0(C_n,E_B\otimes N_{L-D})=0$, and then the previous discussion, together with base change, show that the fiber of $\mathscr{G}$ at $D$ vanishes: $\mathscr{G}\otimes \kappa(D) = 0$. Thus, $\mathscr{G}$ is supported away from $D$, but since it is torsion-free, it must be that $\mathscr{G}=0$, and a fortiori $H^0(C_m,\mathscr{G}\otimes N_L) = 0$. 
\end{proof}

\begin{remark}\label{remark:proofvanishingcriterion}
	Let's just see explicitly that this corollary corresponds to the Vanishing Criterion: let $p\geq 0$ and $D\in C_m$ an effective divisor with $m\leq p$. Then Lemma \ref{prop:koszulcohomology} shows that $Z_{p,1}(C,B,L) \cong H^0(C_{p+1},E_B\otimes N_L)$ and $Z_{p-m,1}(C,B,L-D) \cong H^0(C_{p+1-m},E_B\otimes N_{L-D})$. Hence we can reinterpret Corollary \ref{cor:robcorollary} as a statement on Koszul cycles, by saying that if $Z_{p-m,1}(C,B,L-D)=0$ then $Z_{p,1}(C,B,L)=0$ as well. Observe that this is true without any requirements on $B$ and $L$. 
	
	If $H^0(C,B)=0$, then $Z_{p-m,1}(C,B,L-D)=K_{p-m,1}(C,B,L-D)$ and $Z_{p,1}(C,B,L)=K_{p,1}(C,B,L)$ so we get exactly the Vanishing Criterion.
\end{remark}

\begin{remark}\label{remark:generalizationgreen}
	Corollary \ref{cor:robcorollary}, o generalizes Green's vanishing theorem \cite[Theorem 3.a.1]{GreenKoszul1984} for line bundles on curves. Indeed, Green's vanishing states that if we have $h^0(C,B+L)\leq p$, then $Z_{p,1}(C,B,L)=0$. However, if $h^0(C,B+L)\leq p$, then we can find an effective divisor of degree $D$ such that $h^0(C,B+L-D)=0$, and then we see that $Z_{0,1}(C,B,L-D) = H^0(C,B+L-D)$, so the vanishing $Z_{p,1}(C,B,L)=0$ follows from Corollary \ref{cor:robcorollary}.
\end{remark}

\begin{remark}\label{remark:projection}
We can also interpret Corollary \ref{cor:robcorollary} in terms of projection maps of syzygies. Let's rephrase it in the terms of Remark \ref{remark:proofvanishingcriterion}: if $Z_{p-m,1}(C,B,L-D)=0$ then $Z_{p,1}(C,B,L)=0$ as well. It is clear that this statement can be reduced to the case where $D=x$ consist of a single point: assume for simplicity that the point $x$ is not a base point of $L$. Then the kernel bundles $M_L,M_{L-x}$ as in \eqref{evB} fit into an exact sequence
\begin{equation}
0 \lra M_{L-x} \lra M_L \lra \mathcal{O}_C(-x) \lra 0
\end{equation}
which in turn induces another exact sequence
\begin{equation}\label{eq:kernelbundleswedges}
0 \lra \wedge^p M_{L-x} \lra \wedge^p M_L  \lra \wedge^{p-1}M_{L-x}\otimes \mathcal{O}_C(-x) \lra 0
\end{equation} 
Standard results about kernel bundles \cite[Proof of Proposition 2.4]{AproduNagelBook2010} show that we have canonical identifications $Z_{p,1}(C,B,L) \cong H^0(C,\wedge^p M_L \otimes (B+L))$. Hence, tensoring the exact sequence \eqref{eq:kernelbundleswedges} with $B+L$ and taking global sections we get an exact sequence
\begin{equation}
0 \lra Z_{p,1}(C,B+x,L-x) \lra Z_{p,1}(C,B,L) \overset{\pr_x}{\lra} Z_{p-1,1}(C,B,L-x)
\end{equation}
The map $\operatorname{pr}_x$ is the so-called \emph{projection map} for syzygies, and it has been much studied, especially by Aprodu \cite{AproduVanishing2002},\cite{AproduNagelBook2010} and more recently by Kemeny \cite{KemenyProj20}. Thus, our statement seems to suggest the following: suppose that $Z_{p,1}(C,B,L)\ne 0$, then there exists an $\alpha\in Z_{p,1}(C,B,L)$ such that $\pr_x(\alpha)\ne 0$. In particular, this could most certainly be proved with Aprodu's tecnhiques as in \cite{AproduVanishing2002}, however we feel that the proof of Corollary \ref{cor:robcorollary} via tautological bundles gives a different, and useful,  point of view on the geometry of the problem. Moreover, it is not immediately clear to us how to get to the more general Corollary \ref{cor:basicvanishingsyzygies} using projection maps.
\end{remark}

\section{The secant conjecture}\label{section:secantconj}

In this section, we turn to the Secant Conjecture. We have already presented our strategy in the Introduction, but we recall it here, filling in the details. Thus, fix $c\geq 0$ and let $C$ be a curve of genus $g$ and of Clifford index $\operatorname{Cliff}(C)\geq c$, and let $L$ be a nonspecial line bundle on $C$ of degree $d=2g+p+1-c$. By Riemann-Roch, we have $h^0(C,L) = g+p+2-c$.

Suppose that $L$ is $(p+1)$-very ample. The conjecture asserts that $K_{p,1}(C,L,L)=0$. By duality \cite[Theorem 2.c.6]{GreenKoszul1984}, this is the same as $K_{g-c,1}(C,K_C-L,L)=0$ and since $K_C-L$ is special, we can attempt to use the Vanishing Criterion with a general divisor $D\in C_{g-2c}$. So, we need to prove that $K_{c,1}(C,K_C-L,L-D)=0$. 

\begin{example}[The case $c=0$]
	When $c=0$,  we need to show
	\begin{equation}
	K_{0,1}(C,K_C-L,L-D) = H^0(C,K_C-D) = 0
	\end{equation}
	for a general $D\in C_g$, but this is obviously true.
\end{example}


In the rest, we consider the cases $c\geq 1$. Then, we can use Proposition \ref{prop:koszulcohomology} and look at $K_{c,1}(C,K_C-L,L-D)$ as the kernel of the multiplication map:
\begin{equation}
H^0(C_c,E_{K_C-D})\otimes H^0(C_c,N_{L-D}) \lra H^0(C_c,E_{K_C-D}\otimes N_{L-D})
\end{equation}
Under a suitable Brill-Noether condition on $C$, we can express this in terms of a kernel bundle. 

\begin{lemma}\label{lemma:c-1veryampleness}
	Fix an integer $c\geq 1$  and  let $C$ be  a smooth curve of genus $g$ such that 
	\begin{equation}
	\dim W^1_{g-c}(C)=\rho(g,1,g-c) = g-2c-2.
	\end{equation} 
	Then for a general $D\in C_{g-2c}$ the line bundle $K_C-D$ is $(c-1)$-very ample, hence we get the exact sequence on $C_c$:
		\begin{equation}
		0 \longrightarrow M_{E_{K_C-D}} \longrightarrow H^0(C,K_C-D)\otimes \mathcal{O}_{C_c} \longrightarrow E_{K_C-D} \longrightarrow 0 
		\end{equation}
	and moreover
	\begin{equation}\label{eq:isotoM}
	K_{c,1}(C,K_C-L,L-D) \cong H^0(C_c,M_{E_{K_C-D}}\otimes N_{L-D})
	\end{equation}
\end{lemma}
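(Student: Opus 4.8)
The plan is to establish the three assertions in order: the $(c-1)$-very ampleness of $K_C-D$ for general $D$, the resulting evaluation sequence on $C_c$, and the identification of its twisted sections with Koszul cohomology. The first two are geometric, the last is formal.

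First I would reduce very ampleness to a Brill--Noether statement via Lemma \ref{kveryample}. Since $(c-1)$-very ampleness is tested by effective divisors $\xi$ of degree $(c-1)+2=c$, the bundle $K_C-D$ fails to be $(c-1)$-very ample precisely when there is an effective $\xi\in C_c$ with $h^0(C,K_C+\xi-(K_C-D))>h^0(C,K_C-(K_C-D))$, that is $h^0(C,D+\xi)>h^0(C,D)$. For a general $D\in C_{g-2c}$ one has $h^0(C,D)=1$ (as $\deg D=g-2c<g$), so failure means exactly that the degree $g-c$ divisor $D+\xi$ is special, i.e. $\mathcal{O}_C(D+\xi)\in W^1_{g-c}(C)$, for some effective $\xi$ of degree $c$.

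Next comes the dimension count, which I expect to be the crux. Let $C^1_{g-c}\subseteq C_{g-c}$ denote the locus of effective divisors of degree $g-c$ with $h^0\geq 2$, the preimage of $W^1_{g-c}(C)$ under $C_{g-c}\to\operatorname{Pic}^{g-c}(C)$. Fibering it over $W^1_{g-c}(C)$ with fibers $\mathbb{P}(H^0(C,E))$, the hypothesis $\dim W^1_{g-c}(C)=g-2c-2$ --- together with the fact that it is equivalent to the full Martens--Mumford bounds on all the loci $W^r_{g-c}(C)$, so that the higher-$h^0$ strata contribute strictly less --- yields $\dim C^1_{g-c}=g-2c-1$. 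Now the incidence locus $\{(D,\xi)\in C_{g-2c}\times C_c \,:\, D+\xi\in C^1_{g-c}\}$ is the preimage of $C^1_{g-c}$ under the finite addition map $\sigma\colon C_{g-2c}\times C_c\to C_{g-c}$; as $\sigma$ has finite fibers, this locus has dimension at most $g-2c-1$, so its image under the first projection is a proper closed subset of $C_{g-2c}$. Hence a general $D$ admits no such $\xi$, and $K_C-D$ is $(c-1)$-very ample.

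Finally I would assemble the cohomological consequences. By the discussion following \eqref{evB}, the $(c-1)$-very ampleness of $K_C-D$ is equivalent to the global generation of $E_{K_C-D}$ on $C_c$, which makes $\operatorname{ev}_{K_C-D}$ surjective and upgrades \eqref{evB} to the stated short exact sequence; in particular $M_{E_{K_C-D}}$ is then a vector bundle. Tensoring this sequence by the line bundle $N_{L-D}$ preserves exactness, and taking global sections gives a left-exact sequence identifying $H^0(C_c,M_{E_{K_C-D}}\otimes N_{L-D})$ with the kernel of the multiplication map $H^0(C,K_C-D)\otimes H^0(C_c,N_{L-D})\to H^0(C_c,E_{K_C-D}\otimes N_{L-D})$. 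Using the isomorphism $H^0(C,K_C-D)\cong H^0(C_c,E_{K_C-D})$ from \eqref{evB}, Proposition \ref{prop:koszulcohomology} in the form \eqref{eq:kermult} identifies this kernel with $K_{c,1}(C,K_C-L,L-D)$, which is exactly \eqref{eq:isotoM}. The main obstacle is the dimension bound $\dim C^1_{g-c}\leq g-2c-1$: once very ampleness is secured, everything downstream is a formal manipulation of the evaluation sequence, so the genuine content lies in controlling the special-divisor locus through the Brill--Noether hypothesis.
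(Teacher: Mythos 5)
Your overall architecture matches the paper's: reduce $(c-1)$-very ampleness of $K_C-D$ to the non-existence of $\xi\in C_c$ with $h^0(C,D+\xi)\geq 2$ via Lemma \ref{kveryample}, bound the dimension of the resulting bad locus in $C_{g-2c}$ through $C^1_{g-c}$, and then read off \eqref{eq:isotoM} from the twisted evaluation sequence together with Proposition \ref{prop:koszulcohomology} (your final paragraph is in fact more explicit than the paper, whose proof only addresses the very-ampleness claim; do note that identifying $K_{c,1}$ with the kernel, rather than just $Z_{c,1}$, uses $H^0(C,K_C-L)=0$, i.e.\ the standing nonspeciality of $L$). The one step where you diverge, and where your argument is soft, is the bound $\dim C^1_{g-c}\leq g-2c-1$. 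You stratify $C^1_{g-c}$ over $W^1_{g-c}(C)$ by $h^0$ and assert that the deeper strata contribute less because the hypothesis ``is equivalent to the full Martens--Mumford bounds on all the loci $W^r_{g-c}(C)$.'' That equivalence, as stated in the paper's introduction, concerns Martens' and Mumford's theorems themselves (the cases $c=1,2$); the lemma's hypothesis is only $\dim W^1_{g-c}(C)=g-2c-2$, and to conclude $\dim W^r_{g-c}(C)+r\leq g-2c-1$ for all $r\geq 2$ you would need an additional (standard, but unstated) excess argument such as $\dim W^r_{g-c}\leq \dim W^1_{g-c-r+1}\leq \dim W^1_{g-c}-(r-1)$. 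The paper sidesteps this entirely by invoking \cite[Lemma III.3.5]{ACGH}, which says the general fiber of the Abel--Jacobi map $C^1_{g-c}\to W^1_{g-c}(C)$ over every irreducible component is one-dimensional, giving $\dim C^1_{g-c}=\dim W^1_{g-c}(C)+1$ directly from the stated hypothesis. Either route works, but you should either cite that lemma or supply the excess argument; as written, the appeal to ``the full Martens--Mumford bounds'' is not a consequence of the hypothesis you are given. Your treatment of the incidence locus as $\sigma^{-1}(C^1_{g-c})$ under the finite addition map is a clean, equivalent repackaging of the paper's locus $\Sigma$.
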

\begin{proof}
	By Lemma \ref{kveryample}, $K_C-D$ fails to be $(c-1)$-very ample  precisely when there exists a $\xi\in C_{c}$ such that $h^0(D+\xi) > h^0(C,D)=1$. Thus, let 
	\begin{equation}
	Z = \{ D\in C_{g-2c} \,|\, h^0(D+\xi) \geq 2 \,\,\text{ for a certain } \xi\in C_c \}
	\end{equation}
	we need to prove that $Z$ is a proper subset of $C_{g-2c}$. By definition, $Z$ is the image of the locus
	\begin{equation}
	\Sigma = \{ (H,\xi) \in C_{g-c}\times C_c  \,|\, h^0(C,H)\geq 2, \, \xi \leq H\}
	\end{equation}
	under the difference map $\Sigma \to Z, (H,\xi) \mapsto H-\xi$; in particular $\dim Z \leq \dim \Sigma$. To estimate $\dim \Sigma$, we observe that the projection $\Sigma \to C_{g-c}$ is clearly finite onto its image, which is $C^1_{g-c} := \{ H\in C_{g-c} \,|\, h^0(C,H) \geq 2 \}$. Hence, $\dim \Sigma = \dim C^1_{g-c}$. Finally, $C^1_{g-c}$ mapso onto the Brill-Noether locus $W^1_{g-c}(C)$ under the Abel-Jacobi map $u\colon C^1_{g-c} \to W^1_{g-c}(C)$, and by hypothesis $\dim W^1_{g-c}(C)\leq g-2c-2$. Furthermore, we know from \cite[Lemma III.3.5]{ACGH} that the general fibers of the Abel-Jacobi map over every irreducible component of $W^1_{g-c}(C)$ have dimension one. This shows that $\dim C^1_{g-c} = \dim W^1_{g-c}(C)+1 \leq g-2c-1$: hence $\dim Z \leq g-2c-1$, so that it is a proper subset of $C_{g-2c}$.
\end{proof} 

We can try to compute the group $H^0(C_c,M_{E_{K_C-D}}\otimes N_{L-D})$ via the Buchsbaum-Rim complex of $M_{E_{K_C-D}}$:

\begin{lemma}\label{lemma:resolution}
	With the same hypotheses of Lemma \ref{lemma:c-1veryampleness} we have an exact complex on $C_c$:
	\begin{small}
	\begin{equation}
0 \to \begin{matrix} \wedge^{2c}H^0(C,K_C-D) \\ \otimes \\  \operatorname{Sym}^{c-1}E_{K_C-D}^{\vee}\otimes S_{L-K_C} \end{matrix} \to  \dots \to \begin{matrix}\wedge^{c+2}H^0(C,K_C-D) \\ \otimes \\ E_{K_C-D}^{\vee}\otimes S_{L-K_C} \end{matrix} \to \begin{matrix}\wedge^{c+1}H^0(C,K_C-D) \\ \otimes \\  S_{L-K_C} \end{matrix} \to M_{E_{K_C-D}} \otimes N_{L-D} \to 0 
	\end{equation}
	\end{small}
\end{lemma}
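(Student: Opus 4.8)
The plan is to recognize the asserted complex as the Buchsbaum-Rim complex of the evaluation map from Lemma \ref{lemma:c-1veryampleness}, suitably twisted. Set $V := H^0(C,K_C-D)$ and $E := E_{K_C-D}$, and let $\phi\colon V\otimes \mathcal{O}_{C_c} \lra E$ be the evaluation map of \eqref{evB}. By Lemma \ref{lemma:c-1veryampleness}, for general $D\in C_{g-2c}$ the bundle $K_C-D$ is $(c-1)$-very ample, which is equivalent to $E$ being globally generated; hence $\phi$ is surjective, with kernel $M_{E_{K_C-D}}$. First I would record the ranks. The bundle $E$ has rank $c$, and by Riemann-Roch $h^0(C,K_C-D) = \deg(K_C-D) - g + 1 + h^0(C,D) = (2c-1) + h^0(C,D)$; since $\deg D = g-2c < g$ and $D$ is general we have $h^0(C,D)=1$, so $\dim V = 2c$ and $M_{E_{K_C-D}}$ has rank $c$.

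Next I would invoke the Buchsbaum-Rim complex of $\phi\colon F \lra G$ with $F = V\otimes \mathcal{O}_{C_c}$ of rank $2c$ and $G = E$ of rank $c$. Because $\phi$ is surjective everywhere, the ideal of maximal minors $I_c(\phi)$ is the unit ideal, so the depth hypothesis in the Buchsbaum-Rim acyclicity criterion is trivially satisfied and the complex
\[
0 \lra \wedge^{2c}F \otimes \operatorname{Sym}^{c-1}G^{\vee}\otimes (\det G)^{\vee} \lra \cdots \lra \wedge^{c+1}F \otimes (\det G)^{\vee} \overset{\phi}{\lra} F \lra G \lra 0
\]
is exact, with general term $\wedge^{c+j}F \otimes \operatorname{Sym}^{j-1}G^{\vee}\otimes (\det G)^{\vee}$ for $j=1,\dots,c$. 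Truncating away the surjection $F \lra G$, whose kernel is $M_{E_{K_C-D}}$, then yields an exact resolution
\[
0 \lra \wedge^{2c}V \otimes \operatorname{Sym}^{c-1}E^{\vee}\otimes (\det E)^{\vee} \lra \cdots \lra \wedge^{c+1}V \otimes (\det E)^{\vee} \lra M_{E_{K_C-D}} \lra 0 .
\]

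Finally I would tensor this resolution by $N_{L-D}$ and simplify the recurring determinant twist. Since $\det E = \det E_{K_C-D} = N_{K_C-D}$ by definition, and using $N_B \cong S_B \otimes \delta^{\vee}$ from \eqref{NL} together with the fact that $L\mapsto S_L$ is a group homomorphism, one computes
\[
(\det E)^{\vee}\otimes N_{L-D} \cong N_{K_C-D}^{\vee}\otimes N_{L-D} \cong S_{K_C-D}^{\vee}\otimes S_{L-D} \cong S_{(L-D)-(K_C-D)} \cong S_{L-K_C},
\]
where the $\delta$'s cancel. Substituting this into each term turns $\wedge^{c+j}V \otimes \operatorname{Sym}^{j-1}E^{\vee}\otimes (\det E)^{\vee}\otimes N_{L-D}$ into $\wedge^{c+j}V \otimes \operatorname{Sym}^{j-1}E^{\vee}\otimes S_{L-K_C}$, which upon reindexing $i=j-1=0,\dots,c-1$ is exactly the asserted complex with target $M_{E_{K_C-D}}\otimes N_{L-D}$.

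The main, and essentially only, obstacle is bookkeeping: pinning down the normalization of the Buchsbaum-Rim complex so that the twist $(\det G)^{\vee}$ appears, and verifying the determinant identity via \eqref{NL}. The conceptual inputs, namely surjectivity of $\phi$ (equivalently global generation of $E_{K_C-D}$, from Lemma \ref{lemma:c-1veryampleness}) and the rank count $\dim V = 2c$, are immediate, and no genericity beyond that already granted by Lemma \ref{lemma:c-1veryampleness} is needed.
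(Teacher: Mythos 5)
Your proposal is correct and follows exactly the paper's argument: the author likewise obtains the complex by applying the Buchsbaum--Rim complex of \cite[Theorem B.2.2]{LazarsfeldPositivityI2004} to the surjective evaluation sequence of Lemma \ref{lemma:c-1veryampleness}. Your added bookkeeping (the rank count $\dim H^0(C,K_C-D)=2c$ and the identification $(\det E_{K_C-D})^{\vee}\otimes N_{L-D}\cong S_{L-K_C}$ via \eqref{NL}) is exactly the routine verification the paper leaves implicit.
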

\begin{proof}
	This follows from the Buchsbaum-Rim complex of \cite[Theorem B.2.2]{LazarsfeldPositivityI2004}, applied to the short exact sequence of Lemma \ref{lemma:c-1veryampleness}.  
\end{proof} 

We apply now this strategy to the cases $c=1$ and $c=2$ of the Secant Conjecture.

\subsection{The secant conjecture for $c=1$}
When $c=1$, the curve $C$ is not hyperelliptic and the line bundle $L$ has degree $2g+p$, so that it is automatically nonspecial. Furthermore, since $C$ is not hyperelliptic, Martens' Theorem \ref{eq:martens} gives $\dim W^1_{g-1}(C)=\rho(g,1,g-1)$, so that Lemma \ref{lemma:c-1veryampleness} applies, and  we need to show that
\begin{equation} 
K_{1,1}(C,K_C-L,L-D) \cong H^0(C,M_{K_C-D}\otimes (L-D)) = 0
\end{equation}
for a general divisor $D\in C_{g-2}$. Then, Lemma \ref{lemma:resolution} gives an isomorphism
\begin{equation}
M_{K_C-D}\otimes (L-D) \cong \wedge^2 H^0(C,K_C-D)\otimes (L-K_C) \cong L-K_C.
\end{equation}
Observe that this isomorphism is exactly the base-point-free pencil trick that was used in the original proof of \cite[Theorem 3.3]{GreenLazarsfeldFinite1988}, and one can regard the Buchsbaum-Rim complex as its natural generalization. To conclude, we use the assumption that $L$ is $(p+1)$-very ample: indeed, by Remark \ref{remark:pveryampleness} and duality, the $(p+1)$-very ampleness of $L$ is precisely equivalent to $H^0(C,L-K_C)=0$.

\subsection{The secant conjecture for $c=2$}\label{section:c2}

Let us turn to the new case $c=2$. Then $C$ is a smooth curve of genus $g$ and Clifford index $\operatorname{Cliff}(C)\geq 2$, and $L$ is a $(p+1)$-very ample line bundle of degree $d = 2g+p-1$; in particular $L$ is automatically nonspecial. Now, \emph{assume that the curve $C$ is not bielliptic}: then Mumford's theorem \ref{eq:mumford} shows that $\dim W^1_{g-4}(C) = \rho(g,1,g-4)$, so that we can apply Lemma \ref{lemma:c-1veryampleness}, and then we need to show that
\begin{equation}
K_{2,1}(C,K_C-L,L-D) \cong H^0(C_2,M_{E_{K_C-D}}\otimes N_{L-D}) = 0
\end{equation}
where $D\in C_{g-4}$ is a general divisor. To do this, we can apply Lemma \ref{lemma:resolution}, and get a resolution:
\begin{equation}\label{eq:resolutionMc2} 
0 \lra \begin{matrix}\wedge^4 H^0(C,K_C-D) \\ \otimes \\ E^{\vee}_{K_C-D} \otimes S_{L-K_C} \end{matrix} \lra \begin{matrix}\wedge^3 H^0(C,K_C-D)\\ \otimes \\ S_{L-K_C}\end{matrix} \lra M_{E_{K_C-D}} \otimes N_{L-D} \lra 0
\end{equation}
Now we can use this resolution to compute global sections. We need a preliminary Lemma: 

\begin{lemma}\label{lemma:lemmah1}
	Let $L$ be a globally generated line bundle and $B$ another line bundle such that $H^1(C,L)=H^1(C,B+L)=0$. Then
	\begin{equation}
	H^1(C_{p+1},E_B\otimes N_L) \cong K_{p-1,2}(C,B,L).
	\end{equation} 	
\end{lemma}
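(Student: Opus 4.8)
The plan is to obtain $H^1(C_{p+1},E_B\otimes N_L)$ as the next term of the very long exact sequence that already computes $H^0(C_{p+1},E_B\otimes N_L)\cong Z_{p,1}(C,B,L)$ in Proposition \ref{prop:koszulcohomology}. Carrying that sequence one step further reduces the statement to identifying a cokernel of a multiplication map with the Koszul group $K_{p-1,2}(C,B,L)$, and the hypotheses $H^1(C,L)=H^1(C,B+L)=0$ are exactly what is needed to make the intermediate terms collapse.

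First I would compute the cohomology of $E_B\otimes N_L$ through the universal family. Since $\Xi_{p+1}\to C_{p+1}$ is finite and $E_B\otimes N_L\cong \pr_{C_{p+1},*}(\pr_C^*B\otimes\mathcal{O}_{\Xi_{p+1}}\otimes\pr_{C_{p+1}}^*N_L)$, the Leray spectral sequence together with the isomorphism $\Xi_{p+1}\cong C\times C_p$ (under which the projection to $C_{p+1}$ becomes the addition map $\sigma=\sigma_{1,p}$) and the formula $\sigma^*N_L\cong \pr_C^*L\otimes\pr_{C_p}^*N_L\otimes\mathcal{O}(-\Xi_{1,p})$ of Lemma \ref{lemma:globalmayervietoris} (using $N_L=L$ on $C_1=C$) give
\begin{equation*}
H^i(C_{p+1},E_B\otimes N_L)\cong H^i(C\times C_p,\ \pr_C^*(B+L)\otimes\pr_{C_p}^*N_L\otimes\mathcal{O}(-\Xi_{1,p})).
\end{equation*}
Then I tensor the structure sequence $0\to\mathcal{O}(-\Xi_{1,p})\to\mathcal{O}\to\mathcal{O}_{\Xi_{1,p}}\to 0$ on $C\times C_p$ by $\pr_C^*(B+L)\otimes\pr_{C_p}^*N_L$ and take cohomology. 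By K\"unneth the middle term is $H^0(C,B+L)\otimes H^i(C_p,N_L)$, where the hypothesis $H^1(C,B+L)=0$ has killed the remaining summand; the right-hand term is supported on $\Xi_{1,p}\cong C\times C_{p-1}$, and applying the previous computation once more, now with $B+L$ in place of $B$ and one symmetric product lower, identifies its cohomology with $H^i(C_p,E_{B+L}\otimes N_L)$. This yields the long exact sequence
\begin{equation*}
\cdots \to H^0(C,B+L)\otimes H^i(C_p,N_L)\xrightarrow{\,m_i\,} H^i(C_p,E_{B+L}\otimes N_L)\to H^{i+1}(C_{p+1},E_B\otimes N_L)\to\cdots
\end{equation*}

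Reading off degree one finishes the proof. By Lemma \ref{lemma:cohomologysymmetricproduct} and $H^1(C,L)=0$ we get $H^1(C_p,N_L)\cong\wedge^{p-1}H^0(C,L)\otimes H^1(C,L)=0$, so the sequence collapses to $H^1(C_{p+1},E_B\otimes N_L)\cong\operatorname{coker}(m_0)$. Finally I identify $m_0$: via \eqref{evB} one has $H^0(C,B+L)\cong H^0(C_p,E_{B+L})$ and $H^0(C_p,N_L)\cong\wedge^pH^0(C,L)$, and $m_0$ is the multiplication map, which by Proposition \ref{prop:koszulcohomology} applied to the pair $(B+L,L)$ is the Koszul differential landing in $H^0(C_p,E_{B+L}\otimes N_L)\cong Z_{p-1,1}(C,B+L,L)$. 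Hence $\operatorname{coker}(m_0)=Z_{p-1,1}(C,B+L,L)/\operatorname{im}(d)=K_{p-1,2}(C,B,L)$, as claimed.

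The main obstacle is the second step: checking that the restriction of the sheaf to $\Xi_{1,p}$ reproduces, one symmetric product lower, exactly $E_{B+L}\otimes N_L$. This requires a second application of Lemma \ref{lemma:globalmayervietoris} and careful bookkeeping of the twists ($B+L\rightsquigarrow B+2L$, the factor $N_L$, and $\mathcal{O}(-\Xi_{1,p-1})$), after which the answer matches the first step applied to $(B+L,L)$. The secondary point is verifying that the edge map $m_0$ really is the multiplication map, so that Proposition \ref{prop:koszulcohomology} may be invoked; note that the global generation of $L$ enters the argument only through this final appeal to that proposition, while $H^1(C,L)=0$ and $H^1(C,B+L)=0$ are used to collapse the $N_L$-cohomology and the K\"unneth term respectively.
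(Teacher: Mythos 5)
Your proof is correct and follows essentially the same route as the paper: push down along the finite addition map $C\times C_p\to C_{p+1}$, twist the structure sequence of $\Xi_{1,p}$ by $\pr_C^*(B+L)\otimes\pr_{C_p}^*N_L$, kill the middle cohomology via K\"unneth and the hypotheses $H^1(C,L)=H^1(C,B+L)=0$, and identify the resulting cokernel with $K_{p-1,2}(C,B,L)$. The only (harmless) difference is that you obtain the last identification by applying Proposition \ref{prop:koszulcohomology} to the pair $(B+L,L)$ at level $p-1$, whereas the paper quotes Voisin's Hilbert-scheme interpretation of syzygies directly.
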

\begin{proof}
	Consider the universal family $\Xi_{p+1}$ as the image of the closed embedding $\sigma\colon C\times C_p \to C_{p+1}$: by the projection formula, $E_B\otimes N_L \cong \sigma_{*}(\pr_C^*B\otimes \sigma^* N_L)$, and Lemma \ref{lemma:globalmayervietoris} shows that $\sigma^*N_L \cong \pr_C^*L \otimes \pr_{C_{p}}^*L \otimes \mathcal{O}_{C\times C_p}(-\Xi_{1,p})$. Since the map $\sigma$ is finite, it follows that
	\begin{equation}
	H^1(C_{p+1}, E_B\otimes N_L) \cong H^1(C\times C_p,\pr_C^*(B+L)\otimes \pr_{C_p}^*N_L \otimes \mathcal{O}_{C\times C_p}(-\Xi_{1,p})).
	\end{equation}
	The sheaf appearing in the right hand side is the kernel of the surjective map
	\begin{equation}
	\pr_C^*(B+L)\otimes \pr_{C_p}^*N_L \lra (\pr_C^*(B+L)\otimes \pr_{C_p}^*N_L)_{|\Xi_{1,p}} \lra 0 
	\end{equation}
	furthermore, K\"unneth's formula, together with our assumptions and Lemma \ref{lemma:cohomologysymmetricproduct}, yields that $
	H^1(C\times C_p,\pr_C^*(B+L)\otimes \pr_{C_p}^*N_L) = 0$. 
	Hence, taking cohomology we get 
	\begin{align*}
		H^1(C_p,&E_B\otimes N_L) \cong H^1(C\times C_p,\pr_C^*(B+L)\otimes \pr_{C_p}^*N_L \otimes \mathcal{O}_{C\times C_p}(-\Xi_{1,p})) \\
		& \cong \operatorname{Coker} \left[ H^0(C\times C_p,\pr_C^*(B+L)\otimes \pr_{C_p}^*N_L) \lra H^0(\Xi_{1,p},\pr_C^*(B+L)\otimes \pr_{C_p}^*N_L)_{|\Xi_{1,p}}) \right] \\
		& \cong K_{p-1,2}(C,B,L).
	\end{align*}
	where the last isomorphism comes from Voisin's interpretation of syzygies through the Hilbert scheme: see for example \cite[Corollary 5.5]{AproduNagelBook2010}.
\end{proof}

Finally we compute the global sections $H^0(M_{E_{K_C-D}} \otimes N_{L-D})$: 

\begin{lemma}\label{lemma:c2specialD}
With these assumptions, we have
\begin{equation}
H^0(C_2,M_{E_{K_C-D}} \otimes N_{L-D}) \cong \begin{cases} 0 & \text{ if } g\ne p+4 \\ \wedge^{4}H^0(C,K_C-D)\otimes H^0(C,D) & \text{ if } g = p+4 \end{cases}
\end{equation}
\end{lemma}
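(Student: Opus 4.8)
The plan is to push the two-term resolution \eqref{eq:resolutionMc2} through the long exact sequence in cohomology and reduce the whole computation to a single Koszul group on $C$. Throughout write $W = H^0(C,K_C-D)$, which has dimension $4$ for general $D\in C_{g-4}$ (indeed $\deg(K_C-D)=g+2$ and $h^1(K_C-D)=h^0(D)=1$). The first step is to record that the line bundle $S_{L-K_C}$ on the surface $C_2$ has vanishing $H^0$ and $H^1$: since $L$ is $(p+1)$-very ample, the proof of Lemma \ref{lemma:p1veryampleness} gives $H^0(C,L-K_C)=0$, and then Lemma \ref{lemma:cohomologysymmetricproduct} yields $H^0(C_2,S_{L-K_C})=\operatorname{Sym}^2 H^0(C,L-K_C)=0$ together with $H^1(C_2,S_{L-K_C})=H^0(C,L-K_C)\otimes H^1(C,L-K_C)=0$. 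Feeding this into the long exact sequence of \eqref{eq:resolutionMc2} I obtain the clean reduction
\begin{equation*}
H^0(C_2,M_{E_{K_C-D}}\otimes N_{L-D}) \cong \wedge^4 W \otimes H^1(C_2,E_{K_C-D}^{\vee}\otimes S_{L-K_C}).
\end{equation*}

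It then remains to compute the single group $H^1(C_2,E_{K_C-D}^{\vee}\otimes S_{L-K_C})$. I would first dualize it on the surface $C_2$. Using Serre duality together with $K_{C_2}\cong N_{K_C}$ from \eqref{canonical} and the relation $N_B\cong S_B-\delta$ from \eqref{NL}, a short manipulation identifies the Serre-dual bundle,
\begin{equation*}
\bigl(E_{K_C-D}^{\vee}\otimes S_{L-K_C}\bigr)^{\vee}\otimes K_{C_2} \cong E_{K_C-D}\otimes N_{2K_C-L},
\end{equation*}
so that $H^1(C_2,E_{K_C-D}^{\vee}\otimes S_{L-K_C})\cong H^1(C_2,E_{K_C-D}\otimes N_{2K_C-L})^{\vee}$. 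Now I apply Lemma \ref{lemma:lemmah1} with $B=K_C-D$ and the line bundle $2K_C-L$, which is globally generated and nonspecial (both properties follow from the $(p+1)$-very ampleness of $L$ via Lemma \ref{lemma:p1veryampleness}; the remaining hypothesis $H^1(C,3K_C-D-L)=0$ is a direct degree count). This converts the symmetric-product $H^1$ into honest Koszul cohomology:
\begin{equation*}
H^1(C_2,E_{K_C-D}\otimes N_{2K_C-L}) \cong K_{0,2}(C,K_C-D,2K_C-L).
\end{equation*}

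The last step is an elementary computation of this Koszul group. Since $2K_C-L$ is base-point-free and nonspecial, Green's duality \cite[Theorem 2.c.6]{GreenKoszul1984}, with $r=h^0(2K_C-L)-1=g-p-3$, gives $K_{0,2}(C,K_C-D,2K_C-L)^{\vee}\cong K_{g-p-4,0}(C,D,2K_C-L)$; unwinding both dualities yields $H^1(C_2,E_{K_C-D}^{\vee}\otimes S_{L-K_C})\cong K_{g-p-4,0}(C,D,2K_C-L)$. In the range that actually occurs one has $\deg(D-(2K_C-L))<0$, so there is no incoming Koszul term and this group is the kernel of the differential $\wedge^{g-p-4}H^0(2K_C-L)\otimes H^0(C,D)\to \wedge^{g-p-5}H^0(2K_C-L)\otimes H^0(C,D+2K_C-L)$. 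When $g<p+4$ the exterior power vanishes; when $g>p+4$ the factor $H^0(C,D)$ is one-dimensional, so the differential factors as the comultiplication $\wedge^{g-p-4}V\to \wedge^{g-p-5}V\otimes V$ (writing $V=H^0(2K_C-L)$) followed by multiplication by the defining section of $D$, both of which are injective, and the kernel is again zero; finally when $g=p+4$ the group collapses to $K_{0,0}(C,D,2K_C-L)=H^0(C,D)$. Substituting into the reduction of the first paragraph gives exactly the stated dichotomy.

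The main obstacle is the middle step: controlling $H^1(C_2,E_{K_C-D}^{\vee}\otimes S_{L-K_C})$, the cohomology of a twisted \emph{dual} tautological bundle on the symmetric surface, for which none of the direct formulas of Lemma \ref{lemma:cohomologysymmetricproduct} apply. The key idea is that Serre duality on $C_2$ together with Lemma \ref{lemma:lemmah1} trades it for a computable Koszul group, and that the sharp threshold $g=p+4$ emerges precisely as the value for which the exterior power $\wedge^{g-p-4}H^0(2K_C-L)$ first survives in degree zero.
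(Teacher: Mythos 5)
Your argument is correct and follows essentially the same route as the paper's proof: reduce via the resolution \eqref{eq:resolutionMc2} and the vanishing of $H^0$ and $H^1$ of $S_{L-K_C}$ to the single group $H^1(C_2,E_{K_C-D}^{\vee}\otimes S_{L-K_C})$, then convert it by Serre duality, Lemma \ref{lemma:lemmah1} and Green's duality into $K_{g-p-4,0}(C,D,2K_C-L)$, and analyze the Koszul differential. Your treatment is in fact slightly more explicit than the paper's at two points (verifying the hypothesis $H^1(C,3K_C-D-L)=0$ of Lemma \ref{lemma:lemmah1}, and the injectivity of the differential for $g>p+4$ via comultiplication followed by multiplication by the section of $D$), but the substance is identical.
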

\begin{proof}
First we observe that since $L$ is $(p+1)$-very ample,  Lemma \ref{lemma:p1veryampleness} and duality give $H^0(C,L-K_C)=0$, and then Lemma \ref{lemma:cohomologysymmetricproduct} proves the vanishings
\begin{align}
H^0(C_2,S_{L-K_C}) & = \operatorname{Sym}^2 H^0(C, L-K_C) = 0, \\
H^1(C_2,S_{L-K_C}) & = H^0(C,L-K_C)\otimes H^1(C,L-K_C) = 0. 
\end{align}
Hence, taking cohomology in \eqref{eq:resolutionMc2} we get the isomorphism
\begin{equation}
H^0(C_2,M_{E_{K_C-D}}\otimes N_{L-D}) \cong \wedge^4 H^0(C,K_C-D)\otimes H^1(C_2,E_{K_C-D}^{\vee}\otimes S_{L-K_C}).
\end{equation}
Now we need to compute this group: Serre's duality gives
\begin{equation}
H^1(C_2,{E^{\vee}_{K_C-D}}\otimes S_{L-K_C}) \cong H^1(C_2,E_{K_C-D}\otimes S_{K_C-L}\otimes N_{K_C})^{\vee} = H^1(C_2,E_{K_C-D}\otimes N_{2K_C-L})^{\vee}
\end{equation} 
and Lemma \ref{lemma:lemmah1} shows that $H^1(C_2,E_{K_C-D}\otimes N_{2K_C-L}) \cong K_{0,2}(C,K_C-D,2K_C-L)$. Since $L$ is $(p+1)$-very ample, we know that $2K_C-L$ is nonspecial and globally generated from Lemma \ref{lemma:p1veryampleness}, so that  Green's duality theorem \cite[Theorem 2.c.6]{GreenKoszul1984} gives $K_{0,2}(C,K_C-D,2K_C-L) \cong K_{g-p-4,0}(C,D,2K_C-L)^{\vee}$. In summary, we found an isomorphism
\begin{equation}
H^0(C_2,M_{E_{K_C-D}} \otimes N_{L-D}) \cong \wedge^4 H^0(C,K_C-D)\otimes K_{g-p-4,0}(C,D,2K_C-L).
\end{equation}
To conclude, we look at the group $K_{g-p-4,0}(C,D,2K_C-L)$: since $H^0(C,L-2K_C-D)=0$ by degree reasons, it follows that this syzygy group is the kernel of the Koszul differential 
\begin{equation}
d:\wedge^{g-p-4}H^0(C,2K_C-L)\otimes H^0(D) \lra \wedge^{g-p-5}H^0(C,2K_C-L)\otimes H^0(D)
\end{equation}
Since $D$ is general, we have $h^0(C,D)=1$ and then the Koszul differential $d$ is injective, as soon as $g\ne p+4$. If instead $g=p+4$ the kernel of the Koszul differential is $H^0(C,D)$, so that $H^0(C_2,M_{E_{K_C-D}} \otimes N_{L-D}) \cong \wedge^4 H^0(C,K_C-D)\otimes H^0(C,D)$.
\end{proof}

This last Lemma proves the Main Theorem in all cases but $g=p+4$. However, it also gives us a hint on how we might proceed in this case: we give an informal description here which we will develop in a proper proof in the next section. The key idea is to use the General Vanishing Criterion of Lemma \ref{cor:basicvanishingsyzygies}: that result shows that a nonzero syzygy in $K_{p,1}(C,L,L)$ produces a nonzero element in $H^0(C_{g-4},\mathscr{G}\otimes N_L)$, where $\mathscr{G}$ is the pushforward of $\operatorname{pr}_{C_{3}}^*(E_B\otimes N_L)\otimes  \mathcal{O}(-\Xi_{g-4,3})$ from $C_{g-4}\times C_3$. Then, the previous discussion shows that the fiber of $\mathscr{G}$ at a general point $D$ is identified with 
$H^0(C_2,M_{E_{K_C-D}} \otimes N_{L-D}) \cong \wedge^{4}H^0(C,K_C-D)\otimes H^0(C,D)$. Observe that $\wedge^{4}H^0(C,K_C-D)\otimes H^0(C,D)$ is one-dimensional so we expect that $\mathscr{G}$ is a line bundle, at least over an appropriate open subset of $C_{g-4}$. A bit of reflection shows that this line bundle must then be $\mathscr{G}\cong \det M_{E_{K_C}} \cong N_{K_C}^{\vee}$, so that, in conclusion, we would get a nonzero element in
\begin{equation}
H^0(C_{g-4},\mathscr{G}\otimes N_L) \cong H^0(C_{g-4},N_L-N_{K_C}) \cong H^0(C_{g-4},S_{L-K_C})
\end{equation}
but this is impossible, because $H^0(C_{g-4},S_{L-K_C}) \cong \operatorname{Sym}^{g-4}H^0(C,L-K_C)$ and $H^0(C,L-K_C)=0$ from Lemma \ref{lemma:p1veryampleness}.

In the next section, we make this reasoning precise.

\section{Global computations}\label{sec:global}

Our task now is to take the proofs in Sections \ref{section:secantconj} and work them out as the divisor $D$ varies. We can do it in the general setting of the Secant Conjecture: let $c\geq 1$ be an integer, $C$ a smooth curve of genus $g$ and Clifford index $\operatorname{Cliff}(C)\geq c$ and $L$ a nonspecial line bundle of degree $\deg L = 2g+p+1-c$ which is $(p+1)$-very ample. We want to use the General Vanishing Criterion of Lemma \ref{lemma:basicvanishing}, and prove that 
\begin{equation}
H^0(C_{c+1}\times C_{g-2c}, \pr_{C_{c+1}}^*(E_{K_C-L}\otimes N_L)\otimes \pr_{C_{g-2c}}^*N_L \otimes \mathcal{O}(-\Xi_{c+1,g-2c}) ) = 0
\end{equation}
By pushing forward to $C_{g-2c}$, we get the sheaves
\begin{equation}
\mathscr{F} := \pr_{C_{g-2c},*}(\pr_{C_{c+1}}^*(E_{K_C-L}\otimes N_L)\otimes  \mathcal{O}(-\Xi_{c+1,g-2c}))\otimes N_L =: \mathscr{G}\otimes N_L
\end{equation}
and what we need to show is that $H^0(C_{g-2c},\mathscr{F}) \cong H^0(C_{g-2c}, \mathscr{G}\otimes N_L ) = 0$. Since these sheaves are torsion-free, it suffices to prove the vanishing on the open subset
\begin{equation}
U_{g-2c} := \{ D\in C_{g-2c} \,|\, h^0(C,D) = 1 \}
\end{equation}
\begin{remark}\label{remark:Ularge}
We observe that this open subset is large: indeed, its complement $C^1_{g-2c}$ has dimension $\dim C^1_{g-2c} \leq \dim W^1_{g-2c}(C)+1$ by \cite[Lemma III.3.5]{ACGH}. Since the curve $C$ is not hyperelliptic, Martens' theorem, in the form of \cite[Theorem 5.1]{ACGH} shows that $\dim W^1_{g-2c}(C)\leq g-2c-3$, so that $C^1_{g-2c}$ has codimension at least $2$ in $C_{g-2c}$. 

In particular, since $U_{g-2c}$ is large, we will not denote  explicitly the restrictions of vector bundles to $U_{g-2c}$ or $C_{c+1}\times U_{g-2c}$, because a vector bundle on a smooth variety is uniquely determined by its restriction to any large open subset. 
\end{remark}

\textbf{Step 1:} We expect the fiber of $\mathscr{G}$ at $D\in U$ to be $H^0(C_{c+1},E_{K_C-L}\otimes N_{L-D})$, and Proposition \ref{prop:koszulcohomology} shows that $H^0(C_{c+1},E_{K_C-L}\otimes N_{L-D}) \cong H^0(C_c,M_{E_{K_C-D}}\otimes N_{L-D})$. The first step is to globalize this isomorphism in terms of $\mathscr{G}$. To do so, we will globalize the exact sequence
\begin{equation}
0 \to M_{E_{K_C-D}} \lra H^0(C,K_C-D) \otimes \mathcal{O}_{C_{c}} \lra E_{K_C-D}
\end{equation}
with respect to $D$. More precisely, consider on $C\times C_c \times U_{g-2c}$ the exact sequence
\begin{small}
	\begin{equation}\label{eq:globallemma2}
	0 \lra \pr_C^*K_C\otimes \mathcal{O}(-\Xi_{1,g-2c}-\Xi_{1,c}) \lra \pr_C^*K_C\otimes \mathcal{O}(-\Xi_{1,g-2c}) \lra \left( \pr_C^*K_C\otimes \mathcal{O}(-\Xi_{1,g-2c}) \right)_{|\Xi_{1,c}} \lra 0
	\end{equation}
\end{small}
and let
\begin{equation}\label{eq:sheafM}
 0 \lra \mathscr{M} \lra \mathscr{H} \lra \mathscr{E}
\end{equation}
be its pushforward along $\pr_{C_c\times U_{g-2c}}$. Observe that if we restrict the sequence \eqref{eq:globallemma2} to the fibers $C\times \{(\xi,D)\}$, and we take global sections, we get the sequence 
\begin{equation}
H^0(C,K_C-\xi-D) \lra H^0(C,K_C-D) \lra H^0(C,(K_C-D)\otimes \mathcal{O}_{\xi}).
\end{equation}
By our choice of $U$, we see that $h^0(C,K_C-D)=2c$ for all $D\in U$, hence Grauert's theorem gives that  $\mathscr{H}$ and $\mathscr{E}$ are vector bundles on $C_{c}\times U$ of ranks $2c$ and $c$ whose fibers at $(\xi,D)$ are identified with $H^0(C,K_C-D)$ and $H^0(C,(K_C-D)\otimes \mathcal{O}_D)$ respectively. This allows us also to identify the restrictions of $\mathscr{H}$ and $\mathscr{E}$ to the subvarieties $C_c\times \{D\}$: indeed we have the cartesian diagram  
\begin{equation}
\begin{tikzcd}
C\times C_c \times \{D\} \arrow[hook]{r} \arrow{d}{\pr_{C_c}} & C\times C_c \times U \arrow{d}{\pr_{C_c\times U}} \\
C_c\times \{D\} \arrow[hook]{r}  & C_c\times U 
\end{tikzcd}
\end{equation}
and by cohomology and base change we see that 
\begin{align}
	\mathscr{H}_{|C_c\times \{D\}} & \cong \pr_{C_c,*}(\left(\pr_C^*K_C\otimes \mathcal{O}(-\Xi_{1,g-2c})\right)_{|C\times C_c \times \{D\}})\\
	& \cong \pr_{C_c,*}(\pr_C^*(K_C-D)) \cong H^0(C,K_C-D)\otimes \mathcal{O}_{C_c}
\end{align} 
and
\begin{align} 
	\mathscr{E}_{|C_c\times \{D\}} & \cong \pr_{C_c,*}\left(\left( \pr_C^*K_C\otimes \mathcal{O}(-\Xi_{1,g-2c}) \right)_{|\Xi_{1,c}}\right)_{|C\times C_c \times \{D\}} \\
	&\cong \pr_{C_c,*}(\pr_C^*(K_C-D)\otimes \mathcal{O}_{\Xi_{1,c}}) \cong E_{K_C-D}.
\end{align}
Hence, the sequence \eqref{eq:sheafM} on $C_c\times U$  globalizes the evaluation map $H^0(C,K_C-D)\otimes \mathcal{O}_{C_c} \to E_{K_C-D}$
on $C_c$, as $D$ varies in $U$.

Now it is easy to express $\mathscr{G}$ in terms of the sheaf $\mathscr{M}$ from \eqref{eq:sheafM}.

\begin{lemma}\label{lemma:GwithM}
	With the previous notations, we have
	\begin{equation*}
	\mathscr{G}_{|U} \cong \pr_{U,*}\left( \mathscr{M} \otimes \pr_{C_c}^*N_L\otimes \mathcal{O}(-\Xi_{c,g-2c})  \right).
	\end{equation*}
\end{lemma}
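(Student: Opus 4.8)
The plan is to show that the two sides of the claimed isomorphism are nothing but the pushforward to $U$ of one and the same sheaf on $C\times C_c\times U$. Write $\pi\colon C\times C_c\times U\to C_c\times U$ and $\varpi\colon C\times C_c\times U\to U$ for the projections, and set
\[
\mathcal{K} := \pr_C^* K_C \otimes \pr_{C_c}^* N_L \otimes \mathcal{O}(-\Xi_{1,g-2c}-\Xi_{1,c}-\Xi_{c,g-2c}).
\]
I will prove that both $\mathscr{G}_{|U}$ and $\pr_{U,*}(\mathscr{M}\otimes \pr_{C_c}^* N_L\otimes \mathcal{O}(-\Xi_{c,g-2c}))$ are canonically isomorphic to $\varpi_*\mathcal{K}$, so that the whole statement is formal once one geometric input is granted.

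For the right-hand side this is almost immediate. By its definition in \eqref{eq:sheafM}, $\mathscr{M}$ is the pushforward $\pi_*\bigl(\pr_C^* K_C\otimes \mathcal{O}(-\Xi_{1,g-2c}-\Xi_{1,c})\bigr)$. Tensoring by the line bundle $\pr_{C_c}^* N_L\otimes \mathcal{O}(-\Xi_{c,g-2c})$ on $C_c\times U$ and applying the projection formula identifies $\mathscr{M}\otimes \pr_{C_c}^* N_L\otimes \mathcal{O}(-\Xi_{c,g-2c})$ with $\pi_*\mathcal{K}$; pushing forward along $C_c\times U\to U$ and composing pushforwards (the composite equals $\varpi$) exhibits the right-hand side as $\varpi_*\mathcal{K}$.

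The real work is to rewrite $\mathscr{G}_{|U}$ in the same way, using the addition map $\tau=\sigma_{1,c}\colon C\times C_c\to C_{c+1}$. The key input is the identity $\tau_*\pr_C^* B\cong E_B$ for any line bundle $B$ on $C$, which is exactly the description of tautological bundles used in the proof of Lemma \ref{lemma:lemmah1} and comes from the isomorphism $C\times C_c\cong \Xi_{1,c+1}$. Setting $\Phi:=\tau\times \operatorname{id}\colon C\times C_c\times U\to C_{c+1}\times U$, which is finite and flat, flat base change over the projection $\pr_{C_{c+1}}$ yields $\pr_{C_{c+1}}^* E_{K_C-L}\cong \Phi_*\pr_C^*(K_C-L)$. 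Substituting this into the definition of $\mathscr{G}$ and using the projection formula together with $q\circ\Phi=\varpi$ (where $q\colon C_{c+1}\times U\to U$ is the projection), I obtain
\[
\mathscr{G}_{|U}\cong \varpi_*\Bigl(\pr_C^*(K_C-L)\otimes \Phi^*\bigl(\pr_{C_{c+1}}^* N_L\otimes \mathcal{O}(-\Xi_{c+1,g-2c})\bigr)\Bigr).
\]
By Lemma \ref{lemma:globalmayervietoris} with $m=1,n=c$ (so that $N_L$ on $C_1=C$ is $L$) one has $\tau^* N_L\cong \pr_C^* L\otimes \pr_{C_c}^* N_L\otimes \mathcal{O}(-\Xi_{1,c})$, and the twist $\pr_C^*(K_C-L)\otimes \pr_C^* L=\pr_C^* K_C$ absorbs the factor of $L$, so the integrand becomes $\mathcal{K}$ once the incidence divisor is pulled back correctly.

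That pullback is the one genuinely geometric point, and the main obstacle: I must check that $\Phi^*\mathcal{O}(\Xi_{c+1,g-2c})\cong \mathcal{O}(\Xi_{1,g-2c}+\Xi_{c,g-2c})$. Set-theoretically the preimage of $\Xi_{c+1,g-2c}$ under $(x,\xi,D)\mapsto(x+\xi,D)$ is $\{x\in D\}\cup\{\xi\cap D\neq\emptyset\}=\Xi_{1,g-2c}\cup\Xi_{c,g-2c}$, so the two divisors can differ only in the multiplicities along these two irreducible components. Each multiplicity is one: at a generic point of $\Xi_{1,g-2c}$ (and symmetrically of $\Xi_{c,g-2c}$) the map $\Phi$ is unramified, away from the ramification locus $\Xi_{1,c}$, and $\Xi_{c+1,g-2c}$ is smooth and met transversally, while the codimension-two loci where the two components cross or where $\Phi$ ramifies do not affect the divisor class. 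Granting this, the integrand is exactly $\mathcal{K}$, so $\mathscr{G}_{|U}\cong\varpi_*\mathcal{K}$, matching the right-hand side and proving the lemma.
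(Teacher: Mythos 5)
Your proposal is correct and follows essentially the same route as the paper: both arguments pull everything back to $C\times C_c\times U$, use $E_{K_C-L}\cong\sigma_*\pr_C^*(K_C-L)$ together with the formula for $\sigma^*N_L$ from Lemma \ref{lemma:globalmayervietoris} and the pullback $\mathcal{O}(-\Xi_{c+1,g-2c})\mapsto\mathcal{O}(-\Xi_{1,g-2c}-\Xi_{c,g-2c})$, and then factor the pushforward to $U$ through $C_c\times U$ to recognize $\mathscr{M}$. The only cosmetic difference is that you exhibit both sides as $\varpi_*\mathcal{K}$ symmetrically and spell out the multiplicity-one check for the incidence divisor, which the paper leaves as "straightforward to check".
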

\begin{proof}
 We consider the sum map $\sigma\colon C\times C_c \to C_{c+1}$ and the cartesian diagram
\begin{equation*}
\begin{tikzcd}
C\times C_c \times U \arrow{r}{\sigma\times \operatorname{id}}\arrow{d}{\pr_{C\times C_c}} & C_{c+1}\times U \arrow{d}{\pr_{C_{c+1}}} \\
C\times C_c \arrow{r}{\sigma}  & C_{c+1} 
\end{tikzcd}
\end{equation*}
We also recall from  Lemma \ref{lemma:globalmayervietoris} that $\sigma^*N_L \cong \pr_{C}^*L \otimes \pr_{C_c}^*N_L \otimes \mathcal{O}(-\Xi_{1,c})$, so that by base change and projection formula we get the following:
\begin{align*}
\mathscr{G}_{|U} &= \pr_{U,*}\left(\pr_{C_{c+1}}^*(E_{K_C-L}\otimes N_L)\otimes \mathcal{O}(-\Xi_{c+1,g-2c})\right) \\
& \cong \pr_{U,*}\left(\pr_{C_{c+1}}^*(\sigma_*(\pr_C^*(K_C-L))\otimes N_L)\otimes \mathcal{O}(-\Xi_{c+1,g-2c})\right) \\
& \cong \pr_{U,*}\left(\pr_{C_{c+1}}^*(\sigma_*(\pr_C^*K_C\otimes \pr_{C_c}^*N_L \otimes \mathcal{O}(-\Xi_{1,c}))\otimes \mathcal{O}(-\Xi_{{c+1},g-2c})\right) \\
& \cong \pr_{U,*}\left( (\sigma\times \operatorname{id})_*(\pr_C^*K_C\otimes \pr_{C_c}^*N_L \otimes \mathcal{O}(-\Xi_{1,c}))\otimes \mathcal{O}(-\Xi_{c+1,g-2c})\right). 
\end{align*}
It is straightforward to check that $(\sigma\times \operatorname{id})^*\mathcal{O}(-\Xi_{3,g-4}) \cong \mathcal{O}(-\Xi_{1,g-4}-\Xi_{2,g-4})$, and then the projection formula again yields 
\begin{align}
\mathscr{G}_{|U} & \cong (\pr_{U,*}\circ (\sigma\times \operatorname{id})_*)\left(\pr_C^*K_C\otimes \pr_{C_c}^*N_L \otimes \mathcal{O}(-\Xi_{1,c}-\Xi_{c,g-2c}-\Xi_{1,g-2c})\right)\\
& \cong \pr_{U,*}\left(\pr_C^*K_C\otimes \pr_{C_c}^*N_L \otimes \mathcal{O}(-\Xi_{1,c}-\Xi_{c,g-2c}-\Xi_{1,g-2c})\right)
\end{align} 
Now we factor the projection $\pr_{U}$ via the projection $\pr_{C_c\times U}$ and we get:
\begin{equation}
\mathscr{G}_{|U} \cong \pr_{U,*}\left[ \pr_{C_{g-2c}\times C_c,*}\left(\pr_C^*(B+L)\otimes  \mathcal{O}(-\Xi_{1,c}-\Xi_{1,g-2c})\right)\otimes \pr_{C_c}^*N_L\otimes \mathcal{O}(-\Xi_{c,g-2c})  \right]
\end{equation}
which is exactly what we want.
\end{proof}

\textbf{Step 2:} Now we would like to globalize the resolution of $M_{E_{K_C-D}\otimes N_{L-D}}$ given in Lemma \ref{lemma:resolution}. To do so, it will be useful to write down some facts about the bundles in \eqref{eq:sheafM}. 

\begin{lemma}\label{lemma:sheafH}
	Let $\mathscr{H}$ be as in \eqref{eq:sheafM}. Then $\mathscr{H}\cong \pr_{U}^*{M_{K_C}}_{|U}$ and $\det \mathscr{H} \cong \pr_U^*N^{\vee}_{K_C}$.
\end{lemma}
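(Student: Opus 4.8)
The plan is to observe that the sheaf $\pr_C^*K_C\otimes\mathcal{O}(-\Xi_{1,g-2c})$ whose pushforward defines $\mathscr{H}$ in \eqref{eq:sheafM} does not genuinely involve the factor $C_c$. Indeed, both $\pr_C^*K_C$ and the divisor $\Xi_{1,g-2c}$ are pulled back along the projection $q\colon C\times C_c\times U_{g-2c}\to C\times U_{g-2c}$ that forgets $C_c$: the canonical bundle comes from $C$, while $\Xi_{1,g-2c}$ is by definition the universal family $\Xi_{g-2c}\subseteq C\times C_{g-2c}$, whose definition only mentions the point and the degree-$(g-2c)$ divisor, not the auxiliary factor $C_c$. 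Thus I would write this sheaf as $q^*\mathcal{F}$, where $\mathcal{F} := \pr_C^*K_C\otimes\mathcal{O}(-\Xi_{1,g-2c})$ is the corresponding sheaf on $C\times U_{g-2c}$, and compute the pushforward by base change.

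First I would set up the cartesian square whose two horizontal maps forget $C_c$ (namely $q$ on top and the projection $\pr_U\colon C_c\times U_{g-2c}\to U_{g-2c}$ of the statement on the bottom) and whose two vertical maps forget the factor $C$ (namely $\pr_{C_c\times U_{g-2c}}$ on the left and the projection $C\times U_{g-2c}\to U_{g-2c}$ on the right). Since the bottom projection $C_c\times U_{g-2c}\to U_{g-2c}$ is flat, being a projection off a product, flat base change gives $\mathscr{H}=\pr_{C_c\times U_{g-2c},*}(q^*\mathcal{F})\cong \pr_U^*\bigl(\,\text{pushforward of }\mathcal{F}\text{ to }U_{g-2c}\bigr)$.

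It then remains to identify that inner pushforward. Because $\Xi_{1,g-2c}$ is the universal family over $C_{g-2c}$, the pushforward of $\pr_C^*K_C\otimes\mathcal{O}(-\Xi_{1,g-2c})$ along $C\times U_{g-2c}\to U_{g-2c}$ is exactly the restriction to $U_{g-2c}$ of the kernel bundle $M_{E_{K_C}}$: this is precisely the formula obtained by pushing forward the sequence that produces \eqref{evB}, taken with $B=K_C$ and $n=g-2c$. This yields the first claim $\mathscr{H}\cong\pr_U^*{M_{K_C}}_{|U}$, writing $M_{K_C}:=M_{E_{K_C}}$. For the determinant, I would use that on $U_{g-2c}$ the equality $h^0(C,D)=1$ forces $h^0(C,K_C-D)=2c$, so the evaluation map is surjective and $E_{K_C}$ is globally generated there; hence \eqref{evB} is a short exact sequence of vector bundles on $U_{g-2c}$, and taking determinants gives $\det M_{E_{K_C}}\cong N_{K_C}^{\vee}$. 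Pulling back along $\pr_U$ then produces $\det\mathscr{H}\cong\pr_U^*N_{K_C}^{\vee}$.

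There is essentially no hard content beyond bookkeeping, and I do not expect a genuine obstacle: the two points requiring care are checking that $\Xi_{1,g-2c}$ is really pulled back from $C\times U_{g-2c}$ (clear, since its definition ignores $C_c$) and verifying the flatness hypothesis for base change (immediate, as the relevant map is a product projection). The only conceptual step is recognizing the inner pushforward as the kernel bundle $M_{E_{K_C}}$, which is built directly into the construction of tautological bundles in \eqref{evB}.
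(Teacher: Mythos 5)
Your proposal is correct and follows essentially the same route as the paper: the same cartesian square with flat base change along the bottom projection, the identification of the inner pushforward with ${M_{E_{K_C}}}_{|U}$ via \eqref{evB}, and the same observation that $h^0(C,D)=1$ on $U$ makes the evaluation sequence exact on the right so that taking determinants gives $N_{K_C}^{\vee}$. No gaps.
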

\begin{proof}
	By definition, $\mathscr{H} = \pr_{C_c\times U,*}( \pr_C^*K_C\otimes \mathcal{O}(-\Xi_{1,g-2c})) = \pr_{C_c\times U,*}\pr_{C\times U}^*(\pr_C^*K_C\otimes \mathcal{O}(-\Xi_{1,g-2c}))$. The cartesian diagram
	\begin{equation}
	\begin{tikzcd}
	C\times C_c\times U \arrow{r}{\pr_{C\times U}}\arrow{d}{\pr_{C_c\times U}} & C\times U \arrow{d}{\pr_{U}} \\
	C_c\times U \arrow{r}{\pr_U} & U
	\end{tikzcd}
	\end{equation}
	together with flat base change shows that $\mathscr{H}$ is the pullback to $C_c\times U$ of the sheaf $\pr_{U,*}(\pr_C^*K_C\otimes \mathcal{O}(-\Xi_{1,g-2c}))$ on $U$. By the construction of tautological bundles \eqref{evB}, we see that the latter coincides precisely with ${M_{E_{K_C}}}_{|U}$. Moreover, by definition of $U$, we see that the sequence 
	\begin{equation}
	0 \lra {M_{E_{K_C}}}_{|U} \lra H^0(C,K_C)\otimes \mathcal{O}_{U} \lra {E_{K_C}}_{|U} \to 0
	\end{equation}
	is exact on the right. Hence, $\det {M_{E_{K_C}}}_{|U} \cong \det E_{{K_C}}^{\vee} \cong N_{K_C}^{\vee}$, so that $\det \mathscr{H} \cong  \pr_U^*N_{K_C}^{\vee}$.
\end{proof}

\begin{lemma}\label{lemma:detE}
	Let $\mathscr{E}$ be as in \eqref{eq:sheafM}. Then $\det \mathscr{E} \cong \pr_{C_c}^*N_{K_C}\otimes \mathcal{O}(-\Xi_{c,g-2c})$.
\end{lemma}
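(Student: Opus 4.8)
The plan is to identify $\det\mathscr{E}$ by a see-saw argument on the product $C_c\times U$, comparing it with the candidate $\pr_{C_c}^*N_{K_C}\otimes\mathcal{O}(-\Xi_{c,g-2c})$ along the two families of slices $C_c\times\{D\}$ and $\{\xi_0\}\times U$. Concretely, set
\[
\mathcal{L}:=\det\mathscr{E}\otimes\pr_{C_c}^*N_{K_C}^{\vee}\otimes\mathcal{O}(\Xi_{c,g-2c})
\]
on $C_c\times U$, so that the claim is equivalent to the triviality of $\mathcal{L}$. Since $C_c$ is complete, the see-saw principle reduces this to two steps: first showing that $\mathcal{L}$ restricts trivially to every horizontal slice $C_c\times\{D\}$ (which yields $\mathcal{L}\cong\pr_U^*\mathcal{N}$ for some line bundle $\mathcal{N}$ on $U$), and then that $\mathcal{N}\cong\mathcal{L}|_{\{\xi_0\}\times U}$ is trivial for one reduced $\xi_0\in C_c$.

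First I would restrict to a slice $C_c\times\{D\}$ with $D\in U$. By the base-change identifications established just after \eqref{eq:sheafM} one has $\mathscr{E}|_{C_c\times\{D\}}\cong E_{K_C-D}$, hence $\det\mathscr{E}|_{C_c\times\{D\}}\cong N_{K_C-D}$. On the same slice $\pr_{C_c}^*N_{K_C}$ restricts to $N_{K_C}$, while the incidence divisor restricts to $\Xi_{c,g-2c}|_{C_c\times\{D\}}\cong S_D$ (set-theoretically $\{\xi\mid \xi\cap D\neq\emptyset\}=\sum_{x\in D}(x+C_{c-1})$ for reduced $D$, and the class is constant in $D$; alternatively it drops out of the $\sigma^*N_B$ formula of Lemma \ref{lemma:globalmayervietoris}). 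Combining these with $N_B\cong S_B-\delta$ from \eqref{NL} gives $N_{K_C}-S_D\cong S_{K_C-D}-\delta\cong N_{K_C-D}$, so that $\mathcal{L}|_{C_c\times\{D\}}$ is trivial for every $D\in U$. See-saw then yields $\mathcal{L}\cong\pr_U^*\mathcal{N}$.

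To finish I would compute $\mathcal{N}$ by restricting to a vertical slice $\{\xi_0\}\times U$ with $\xi_0=p_1+\dots+p_c$ reduced. Here, by the defining pushforward of \eqref{eq:sheafM} and the splitting $\mathcal{O}_{\xi_0}=\bigoplus_j\mathcal{O}_{p_j}$, the restriction $\mathscr{E}|_{\{\xi_0\}\times U}$ is the direct sum $\bigoplus_j(\pr_C^*K_C\otimes\mathcal{O}(-\Xi_{1,g-2c}))|_{\{p_j\}\times U}\cong\bigoplus_j\mathcal{O}_U(-S_{p_j})$, whose determinant is $\mathcal{O}(-S_{\xi_0})$; meanwhile $\pr_{C_c}^*N_{K_C}$ is trivial on this slice and $\mathcal{O}(\Xi_{c,g-2c})$ restricts to $\mathcal{O}(S_{\xi_0})$. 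Hence $\mathcal{L}|_{\{\xi_0\}\times U}$ is trivial, so $\mathcal{N}$ is trivial and $\mathcal{L}$ itself is trivial, which is the assertion.

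The only genuinely nontrivial input is the identification of the incidence divisor on the two slices, $\Xi_{c,g-2c}|_{C_c\times\{D\}}\cong S_D$ and $\Xi_{c,g-2c}|_{\{\xi_0\}\times U}\cong S_{\xi_0}$; I expect pinning this down carefully to be the main point, the rest being bookkeeping with \eqref{NL}. As an alternative that bypasses see-saw, one can compute $\det\mathscr{E}$ directly as the determinant of a finite flat degree $c$ pushforward: writing $\mathscr{E}=\psi_*\mathcal{L}'$ for $\psi\colon\Xi_{1,c}\times U\to C_c\times U$ and $\mathcal{L}'=(\pr_C^*K_C\otimes\mathcal{O}(-\Xi_{1,g-2c}))|_{\Xi_{1,c}\times U}$, the standard formula $\det\psi_*\mathcal{L}'\cong\mathrm{Nm}_\psi(\mathcal{L}')\otimes\det(\psi_*\mathcal{O})$ gives $\det(\psi_*\mathcal{O})\cong\pr_{C_c}^*N_{\mathcal{O}_C}$ and, via the isomorphism $\Xi_{1,c}\cong C\times C_{c-1}$, the norms $\mathrm{Nm}_\psi(\pr_C^*K_C)\cong\pr_{C_c}^*S_{K_C}$ and $\mathrm{Nm}_\psi(\mathcal{O}(-\Xi_{1,g-2c}))\cong\mathcal{O}(-\Xi_{c,g-2c})$, which assemble to the same answer through \eqref{NL}.
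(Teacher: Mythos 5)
Your see-saw argument is correct, and it reaches the conclusion by a genuinely different route from the paper. The paper never restricts $\mathcal{O}(\Xi_{c,g-2c})$ to either family of slices: it realizes $\mathscr{E}$ as the pushforward of $\pr_C^*K_C\otimes\mathcal{O}(-\Xi_{1,g-2c})$ along the finite map $\sigma\times \operatorname{id}\colon C\times C_{c-1}\times U\to C_c\times U$, pushes forward the tautological short exact sequence
\begin{equation*}
0 \lra \pr_C^*K_C\otimes \mathcal{O}(-\Xi_{1,g-2c}) \lra \pr_C^*K_C \lra (\pr_C^*K_C)_{|\Xi_{1,g-2c}} \lra 0
\end{equation*}
to obtain $0\to\mathscr{E}\to\pr_{C_c}^*E_{K_C}\to\mathcal{F}\to 0$ with $\mathcal{F}$ of rank one on the irreducible reduced divisor $\Xi_{c,g-2c}$, and then reads off $\det\mathscr{E}\cong\pr_{C_c}^*N_{K_C}\otimes\mathcal{O}(-\Xi_{c,g-2c})$ from Lemma \ref{lemma:determinant}. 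That packaging makes the incidence divisor appear with the correct multiplicity automatically, whereas in your approach the identifications $\Xi_{c,g-2c}|_{C_c\times\{D\}}=S_D$ and $\Xi_{c,g-2c}|_{\{\xi_0\}\times U}=S_{\xi_0}$, \emph{including} the multiplicity-one statements, are precisely the nontrivial input you would still have to verify (you rightly flag this; they do hold, by a transversality check at a general point of each component, and are consistent with the restriction formula the paper asserts after \eqref{eq:sheafG}). Note also that see-saw requires triviality on \emph{all} horizontal slices, not just the reduced ones where your computation is cleanest; this follows by semicontinuity since $C_c$ is complete and irreducible, but it deserves a sentence. Your closing alternative via the norm of a finite flat degree-$c$ pushforward is closer in spirit to the paper's argument, though the paper replaces the norm formula by the elementary exact-sequence trick above. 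In short: your route trades the paper's one-line determinant computation for explicit divisor-class bookkeeping on slices, which is more hands-on but concentrates all the real work in the two incidence-divisor identifications.
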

\begin{proof}
	By definition $\mathscr{E} = \pr_{C_c\times U,*}\left( \pr_C^*K_C\otimes \mathcal{O}(-\Xi_{1,g-2c}) \right)_{|\Xi_{1,c}}$. From section \ref{section:additionmaps} we know that the restriction to $\Xi_{1,c}$ can be constructed as the pullback along the map, \begin{equation}
	(\operatorname{id}, \sigma) \times \operatorname{id}\colon C\times C_{c-1} \times U \to C\times C_c \times U, \qquad (p,q,D) \mapsto (p,p+q,D) 
	\end{equation} 
	and it is straightforward to compute that $((\operatorname{id},\sigma)\times \operatorname{id})^*(\pr_{C}^*K_C\otimes \mathcal{O}(-\Xi_{1,g-2c})) \cong \pr_C^*K_C\otimes \mathcal{O}(-\Xi_{1,g-2c})$. Hence, it follows that $\mathscr{E}$ is the pushforward of $ \pr_C^*K_C\otimes \mathcal{O}(-\Xi_{1,g-2c}) $ along the composition:
	\begin{equation}
	\sigma\times \operatorname{id}\colon C\times C_{c-1} \times U \lra C_c\times U, \qquad (p,q,D) \mapsto (p+q,D).
	\end{equation}
	Now, consider the short exact sequence on $C\times C_c\times U$:
	\begin{equation}
	0 \lra \pr_C^*K_C\otimes \mathcal{O}(-\Xi_{1,g-2c}) \lra \pr_C^*K_C \lra (\pr_C^*K_C)_{|\Xi_{1,g-2c}} \lra 0.
	\end{equation}
	Since the map $\sigma \times \operatorname{id}$ is finite, the pushforward $(\sigma\times \operatorname{id})_*$ is exact, and moreover it is easy to see that $(\sigma \times \operatorname{id})_*(\pr_C^*K_C) \cong \pr_{C_c}^*E_{K_C}$, hence we have an exact sequence of sheaves on $C_c\times U$:
	\begin{equation}
	0 \lra \mathscr{E} \lra \pr_{C_c}^*E_{K_C} \lra \mathcal{F} \lra 0
	\end{equation}
	where  $\mathcal{F} = (\sigma\times \operatorname{id})_*((\pr_C^*K_C)_{|\Xi_{1,g-2c}})$ is a sheaf supported on the irreducible divisor $\Xi_{c,g-2c}$ and it is of rank one over it. Hence we see from Lemma \ref{lemma:determinant} that $\det \mathscr{E} \cong \pr_{C_c}^*N_{K_C} \otimes \mathcal{O}(-\Xi_{c,g-2c})$.
\end{proof}

Now it is easy to generalize Lemma \ref{lemma:resolution}.

\begin{lemma}\label{lemma:globalresolution}
	Assume that 
	\begin{equation}
	\dim W^1_{g-c}(C)=\rho(g,1,g-c) = g-2c-2.
	\end{equation} 
	Then there is an exact complex on $C_c\times U$:
	\begin{small}
		\begin{equation}
		0 \to \begin{matrix} \wedge^{2c}\mathscr{H} \\ \otimes \\  \operatorname{Sym}^{c-1}\mathscr{E}^{\vee}\otimes \pr_{C_c}^*S_{L-K_C} \end{matrix} \to  \dots \to \begin{matrix}\wedge^{c+2}\mathscr{H} \\ \otimes \\ \mathscr{E}^{\vee}\otimes \pr_{C_c}^*S_{L-K_C} \end{matrix} \to \begin{matrix}\wedge^{c+1}\mathscr{H} \\ \otimes \\  \pr_{C_c}^*S_{L-K_C} \end{matrix} \to \mathscr{M} \otimes \pr_{C_c}^*N_L\otimes \mathcal{O}(-\Xi_{c,g-2c}) \to 0 
		\end{equation}
	\end{small}
\end{lemma}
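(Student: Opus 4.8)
The plan is to realize the stated complex as the Buchsbaum--Rim complex of \cite[Theorem B.2.2]{LazarsfeldPositivityI2004} associated to the morphism of vector bundles $\varphi\colon\mathscr{H}\to\mathscr{E}$ of \eqref{eq:sheafM}, mirroring in family over $U$ the construction used for a single divisor in Lemma \ref{lemma:resolution}. Since $\operatorname{rank}\mathscr{H}=2c$ and $\operatorname{rank}\mathscr{E}=c$, this complex takes the shape
\[
0\to\wedge^{2c}\mathscr{H}\otimes\operatorname{Sym}^{c-1}\mathscr{E}^{\vee}\otimes\det\mathscr{E}^{\vee}\to\dots\to\wedge^{c+1}\mathscr{H}\otimes\det\mathscr{E}^{\vee}\to\operatorname{Ker}\varphi\to 0,
\]
and by the very definition of $\mathscr{M}$ in \eqref{eq:sheafM} we have $\operatorname{Ker}\varphi=\mathscr{M}$. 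I would then twist by $\pr_{C_c}^*N_L\otimes\mathcal{O}(-\Xi_{c,g-2c})$ and use Lemma \ref{lemma:detE} to compute
\[
\det\mathscr{E}^{\vee}\otimes\pr_{C_c}^*N_L\otimes\mathcal{O}(-\Xi_{c,g-2c})\cong\pr_{C_c}^*\left(N_L\otimes N_{K_C}^{\vee}\right)\cong\pr_{C_c}^*S_{L-K_C},
\]
which turns each term into the one displayed in the statement, with resolved object $\mathscr{M}\otimes\pr_{C_c}^*N_L\otimes\mathcal{O}(-\Xi_{c,g-2c})$.

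The entire content of the lemma is thus the \emph{exactness} of this complex. By the Buchsbaum--Eisenbud acyclicity criterion, the Buchsbaum--Rim complex of $\varphi$ is exact as soon as the locus where $\varphi$ fails to have maximal rank $c$ has codimension at least $\operatorname{rank}\mathscr{H}-\operatorname{rank}\mathscr{E}+1=c+1$ in the smooth, hence Cohen--Macaulay, variety $C_c\times U$. Fibrewise, $\varphi$ at $(\xi,D)$ is the evaluation $H^0(C,K_C-D)\to H^0(C,(K_C-D)\otimes\mathcal{O}_{\xi})$, which drops rank exactly when $h^0(C,K_C-D-\xi)\geq c+1$; since Riemann--Roch gives $h^0(C,K_C-D-\xi)=(c-1)+h^0(C,D+\xi)$, this happens precisely when $h^0(C,D+\xi)\geq 2$. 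Hence the degeneracy locus is $\widetilde\Sigma=\{(\xi,D)\in C_c\times U\mid h^0(C,D+\xi)\geq 2\}$.

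The crux, and the main obstacle, is to bound $\dim\widetilde\Sigma$; this is exactly where the Brill--Noether hypothesis enters, in the same way as in Lemma \ref{lemma:c-1veryampleness}. The addition map $(\xi,D)\mapsto D+\xi$ sends $\widetilde\Sigma$ into $C^1_{g-c}=\{H\in C_{g-c}\mid h^0(C,H)\geq 2\}$, and its fibres are finite, since a subdivisor $\xi\leq H$ of degree $c$ admits only finitely many choices; therefore $\dim\widetilde\Sigma\leq\dim C^1_{g-c}$. As in Lemma \ref{lemma:c-1veryampleness}, \cite[Lemma III.3.5]{ACGH} gives $\dim C^1_{g-c}=\dim W^1_{g-c}(C)+1$, so the hypothesis $\dim W^1_{g-c}(C)=g-2c-2$ yields $\dim\widetilde\Sigma\leq g-2c-1$. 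As $\dim(C_c\times U)=g-c$, the degeneracy locus has codimension at least $c+1$, which is precisely the threshold needed for acyclicity. Note that without the sharp Brill--Noether estimate one would only obtain codimension $c$, one short of what is required, so this bound is the pivotal point of the whole argument.
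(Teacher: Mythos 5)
Your proposal is correct and follows essentially the same route as the paper: the complex is the Buchsbaum--Rim complex of the map $\mathscr{H}\to\mathscr{E}$ from \eqref{eq:sheafM}, twisted by $\pr_{C_c}^*N_L\otimes\mathcal{O}(-\Xi_{c,g-2c})$ with $\det\mathscr{E}^{\vee}$ computed via Lemma \ref{lemma:detE}, and exactness reduces to showing the degeneracy locus $\{h^0(C,D+\xi)\geq 2\}$ has codimension $c+1$, which you obtain exactly as the paper does from $\dim C^1_{g-c}=\dim W^1_{g-c}(C)+1\leq g-2c-1$. Your only deviations are cosmetic: you bound the degeneracy locus by mapping it directly onto $C^1_{g-c}$ rather than identifying it with the locus $\Sigma$ from Lemma \ref{lemma:c-1veryampleness}, and you spell out the fibrewise Riemann--Roch computation that the paper leaves implicit.
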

\begin{proof}
	Consider the sequence \eqref{eq:sheafM}: the  map $\mathscr{H} \lra \mathscr{E}$ fails to be surjective precisely along the intersection of the  locus 
	\begin{equation}
	\Sigma' =\{ (\xi,D) \in C_c\times C_{g-2c}  \,|\, h^0(C,D+\xi) \geq 2  \}
	\end{equation}
	with $C_c\times U$. Using the same notation as in the proof of Lemma \ref{lemma:c-1veryampleness}, we see that $\Sigma'$ is clearly isomorphic to the locus $\Sigma = \{ (\xi,H) \in C_2\times C_{g-2} \,|\, h^0(C,H)\geq 2, \, \xi\leq H\}$ under the map
	\begin{equation}
	\Sigma' \lra  \Sigma, \qquad (\xi,D) \mapsto (\xi,D+\xi)
	\end{equation}
	and the proof of Lemma \ref{lemma:resolution} shows also that $\dim \Sigma \leq g-2c-1$. Hence, the degeneracy locus of the map $\mathscr{H} \to \mathscr{E}$ has codimension at least $g-2c+c-(g-2c-1) = c+1$, which is precisely the expected codimension. Hence, the Buchsbaum-Rim complex \cite[Theorem B.2.2]{LazarsfeldPositivityI2004} gives a resolution of $\mathscr{M}$ as follows:  
	\begin{small}
		\begin{equation}
		0 \to \begin{matrix} \wedge^{2c}\mathscr{H} \\ \otimes \\  \operatorname{Sym}^{c-1}\mathscr{E}^{\vee}\otimes \det \mathscr{E}^{\vee} \end{matrix} \to  \dots \to \begin{matrix}\wedge^{c+2}\mathscr{H} \\ \otimes \\ \mathscr{E}^{\vee}\otimes \det \mathscr{E}^{\vee} \end{matrix} \to \begin{matrix}\wedge^{c+1}\mathscr{H} \\ \otimes \\  \det \mathscr{E}^{\vee} \end{matrix} \to \mathscr{M} \to 0 
		\end{equation}
	\end{small}
	If we tensor this resolution by $\pr_{C_c}^*N_{L}\otimes \mathcal{O}(-\Xi_{c,g-2c})$ we observe from Lemma \ref{lemma:detE} that 
	\begin{equation}
	\det\mathscr{E}^{\vee}\otimes \pr_{C_c}^*N_{L}\otimes \mathcal{O}(-\Xi_{c,g-2c}) \cong  \pr_{C_c}^*(N_L - N_{K_C}) \cong \pr_{C_c}^*S_{L-K_C}
	\end{equation}
	so that we obtain precisely the complex in the statement of the lemma.
\end{proof} 

\textbf{ Step 3: } Now we can specialize to the case $c=2$. 

\begin{lemma}\label{lemma:isoG}
	Let $C$ be a smooth curve of genus $g$ and Clifford index $\operatorname{Cliff}(C)\geq 2$ and $L$ a line bundle of degree $2g+p-1$ which is $(p+1)$-very ample. Assume moreover that $C$ is not bielliptic. Then
	\begin{equation}
	\mathscr{G}_{|U} \cong  R^1\pr_{U,*}(\mathscr{E}^{\vee} \otimes \pr_{C_2}^*S_{L-K_C}) \otimes N_{K_C}^{\vee}.
	\end{equation}
\end{lemma}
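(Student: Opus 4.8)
The plan is to run, in families over $U$, exactly the cohomology computation of Lemma \ref{lemma:c2specialD}. Specializing Lemma \ref{lemma:globalresolution} to $c=2$, the Buchsbaum--Rim complex collapses to a short exact sequence on $C_2\times U$,
\begin{equation}\label{eq:c2globalSES}
0 \to \wedge^{4}\mathscr{H}\otimes \mathscr{E}^{\vee}\otimes \pr_{C_2}^*S_{L-K_C} \to \wedge^{3}\mathscr{H}\otimes \pr_{C_2}^*S_{L-K_C} \to \mathscr{M}\otimes \pr_{C_2}^*N_L\otimes \mathcal{O}(-\Xi_{2,g-4}) \to 0,
\end{equation}
which is the family version of the resolution \eqref{eq:resolutionMc2}. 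By Lemma \ref{lemma:GwithM} the rightmost term pushes forward to $\mathscr{G}_{|U}$, that is $\mathscr{G}_{|U}\cong \pr_{U,*}(\mathscr{M}\otimes \pr_{C_2}^*N_L\otimes \mathcal{O}(-\Xi_{2,g-4}))$. First I would apply $\pr_{U,*}$ to \eqref{eq:c2globalSES} and read off the long exact sequence of higher direct images.

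The crucial step is to show that both $\pr_{U,*}$ and $R^1\pr_{U,*}$ of the middle term $\wedge^{3}\mathscr{H}\otimes \pr_{C_2}^*S_{L-K_C}$ vanish. By Lemma \ref{lemma:sheafH} the sheaf $\mathscr{H}$, hence $\wedge^{3}\mathscr{H}$, is pulled back from $U$, while $\pr_{C_2}^*S_{L-K_C}$ is pulled back from $C_2$; since $\pr_U\colon C_2\times U\to U$ is the projection of a product, the projection formula together with flat base change gives
\begin{equation}
R^i\pr_{U,*}\bigl(\wedge^{3}\mathscr{H}\otimes \pr_{C_2}^*S_{L-K_C}\bigr) \cong \wedge^{3}\mathscr{H}\otimes H^i(C_2,S_{L-K_C})\otimes \mathcal{O}_U,
\end{equation}
where $\wedge^{3}\mathscr{H}$ on the right is understood as the corresponding bundle on $U$. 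Now $H^0(C,L-K_C)=0$ by Lemma \ref{lemma:p1veryampleness} and duality, so Lemma \ref{lemma:cohomologysymmetricproduct} forces $H^0(C_2,S_{L-K_C})=\operatorname{Sym}^2 H^0(C,L-K_C)=0$ and $H^1(C_2,S_{L-K_C})=H^0(C,L-K_C)\otimes H^1(C,L-K_C)=0$ — precisely the two vanishings already exploited in Lemma \ref{lemma:c2specialD}. Hence the middle term of \eqref{eq:c2globalSES} contributes nothing in degrees $0$ and $1$.

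Feeding these vanishings into the long exact sequence of \eqref{eq:c2globalSES} leaves only the connecting isomorphism
\begin{equation}
\mathscr{G}_{|U}\cong \pr_{U,*}\bigl(\mathscr{M}\otimes \pr_{C_2}^*N_L\otimes \mathcal{O}(-\Xi_{2,g-4})\bigr) \cong R^1\pr_{U,*}\bigl(\wedge^{4}\mathscr{H}\otimes \mathscr{E}^{\vee}\otimes \pr_{C_2}^*S_{L-K_C}\bigr).
\end{equation}
Finally, $\wedge^{4}\mathscr{H}=\det\mathscr{H}\cong \pr_U^*N_{K_C}^{\vee}$ by Lemma \ref{lemma:sheafH}, and since this is a line bundle pulled back from $U$, the projection formula extracts it from the higher direct image, giving
\begin{equation}
\mathscr{G}_{|U}\cong R^1\pr_{U,*}\bigl(\mathscr{E}^{\vee}\otimes \pr_{C_2}^*S_{L-K_C}\bigr)\otimes N_{K_C}^{\vee},
\end{equation}
as claimed.

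I expect the only delicate point to be the validity of the projection formula and base change at the level of $R^1$ rather than just $R^0$; this is legitimate precisely because the factors $\wedge^{3}\mathscr{H}$ and $\wedge^{4}\mathscr{H}$ are locally free and pulled back from the base $U$, and because the product structure $C_2\times U\to U$ makes the base-change isomorphism immediate. The genuine arithmetic content of the lemma is entirely concentrated in the vanishing $H^{\bullet}(C_2,S_{L-K_C})=0$, which is exactly the point where the $(p+1)$-very ampleness of $L$ enters.
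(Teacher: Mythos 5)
Your proposal is correct and follows essentially the same route as the paper: specialize Lemma \ref{lemma:globalresolution} to $c=2$, kill the middle term of the resulting short exact sequence using $H^0(C_2,S_{L-K_C})=H^1(C_2,S_{L-K_C})=0$, and extract $\wedge^4\mathscr{H}\cong\pr_U^*N_{K_C}^{\vee}$ by the projection formula. The only point you gloss over is that invoking Lemma \ref{lemma:globalresolution} requires the Brill--Noether condition $\dim W^1_{g-2}(C)=\rho(g,1,g-2)$, which is exactly where Mumford's theorem and the hypotheses that $\operatorname{Cliff}(C)\geq 2$ and $C$ is not bielliptic enter.
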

\begin{proof}
 Since $C$ has Clifford index at least two and is not bielliptic, Mumford's Theorem \eqref{eq:mumford} gives that $\dim W^1_{g-2}(C) = \rho(g,1,g-2)$, so that we can apply Lemma \ref{lemma:globalresolution} and obtain an exact sequence on $C_2\times U$:
\begin{equation}
0 \to \wedge^4 \mathscr{H} \otimes \mathscr{E}^{\vee} \otimes \pr_{C_2}^*S_{L-K_C} \to \wedge^3 \mathscr{H} \otimes \pr_{C_2}^*S_{L-K_C} \to \mathscr{M} \otimes \pr_{C_2}^*N_L \otimes \mathcal{O}(-\Xi_{2,g-4}) \to 0 
\end{equation}
Now we should pushforward this exact sequence along $\pr_{U}$. We first observe that the restriction of $\wedge^3\mathscr{H}\otimes \pr_{C_2}^*S_{L-K_C}$ to a fiber $C_2\times\{D\}$ is given by $\wedge^3 H^0(C,K_C-D)\otimes S_{L-K_C}$, and since $L$ is $(p+1)$-very ample we know from Lemma \ref{lemma:p1veryampleness} and Lemma \ref{lemma:cohomologysymmetricproduct} that $H^0(C_2,S_{L-K_C}) = H^1(C_2,S_{L-K_C}) = 0$. Hence, $\pr_{U,*}( \wedge^3\mathscr{H}\otimes \pr_{C_2}^*S_{L-K_C}) = R^1\pr_{U,*}( \wedge^3\mathscr{H}\otimes \pr_{C_2}^*S_{L-K_C}) \cong 0$ as well, so that pushing forward the previous exact sequence and applying Lemma \ref{lemma:GwithM} we get that
\begin{equation}
\mathscr{G}_{|U} \cong \pr_{U,*}(\mathscr{M}\otimes \pr_{C_2}^*N_L\otimes \mathcal{O}(-\Xi_{2,g-4})) \cong  R^1\pr_{U,*}(\wedge^4 \mathscr{H} \otimes \mathscr{E}^{\vee} \otimes \pr_{C_2}^*S_{L-K_C})
\end{equation}
To conclude, observe that $\mathscr{H}$ is a bundle of rank $4$, so that Lemma \ref{lemma:sheafH} gives that $\wedge^4 \mathscr{H} \cong \det \mathscr{H} \cong \pr_U^*N_{K_C}^{\vee}$, and then we get what we want from the projection formula.
\end{proof}

To conclude, we need to compute explicitly the sheaf $R^1\pr_{U,*}(\mathscr{E}^{\vee} \otimes \pr_{C_2}^*S_{L-K_C})$.

\begin{lemma}\label{lemma:R1pu}
	With the same notations as in Lemma \ref{lemma:isoG} we have
	\begin{equation}
	\mathscr{G}_{|U} \cong \begin{cases} N_{K_C}^{\vee} & \text{ if } g=p+4 \\ 0 & \text{ if } g\ne p+4 \end{cases}
	\end{equation}
\end{lemma}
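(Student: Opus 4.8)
The plan is to deduce everything from Lemma \ref{lemma:isoG}, which gives $\mathscr{G}_{|U}\cong\mathscr{N}\otimes N_{K_C}^{\vee}$ with $\mathscr{N}:=R^1\pr_{U,*}(\mathscr{E}^{\vee}\otimes\pr_{C_2}^*S_{L-K_C})$, and to compute $\mathscr{N}$ by a fibrewise calculation together with the theorem on cohomology and base change for the proper flat projection $\pr_U\colon C_2\times U\to U$ (the sheaf $\mathscr{E}^{\vee}\otimes\pr_{C_2}^*S_{L-K_C}$ is locally free, hence flat over $U$). Since $\mathscr{E}_{|C_2\times\{D\}}\cong E_{K_C-D}$, the fibre of $\mathscr{N}$ at $D\in U$ is controlled by $H^1(C_2,E_{K_C-D}^{\vee}\otimes S_{L-K_C})$, which is exactly the group computed in Lemma \ref{lemma:c2specialD}.

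The crucial observation is that that computation uses only $h^0(C,D)=1$, which holds for \emph{every} $D\in U$ by definition of $U$, not merely for a general $D$. Running the chain of isomorphisms there (Serre duality on $C_2$, Lemma \ref{lemma:lemmah1}, and Green's duality) identifies $H^1(C_2,E_{K_C-D}^{\vee}\otimes S_{L-K_C})$ with $K_{g-p-4,0}(C,D,2K_C-L)$; as $H^0(C,L-2K_C-D)=0$ by degree, this is the kernel of the contraction $\wedge^{g-p-4}H^0(2K_C-L)\to\wedge^{g-p-5}H^0(2K_C-L)\otimes H^0(2K_C-L+D)$ by the section cutting out $D$. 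For $g\neq p+4$ it vanishes: when $g>p+4$ the exponent is positive and the contraction is injective (it is a comultiplication-type Koszul map, injective after splitting off the image of $H^0(2K_C-L)$), while when $g<p+4$ the source is already zero; for $g=p+4$ it equals $H^0(C,\mathcal{O}(D))$, of dimension one. Hence $D\mapsto h^1(C_2,E_{K_C-D}^{\vee}\otimes S_{L-K_C})$ is constantly $0$ for $g\neq p+4$ and constantly $1$ for $g=p+4$.

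If $g\neq p+4$, the base-change map $\mathscr{N}\otimes\kappa(D)\to H^1(C_2,E_{K_C-D}^{\vee}\otimes S_{L-K_C})=0$ is trivially surjective, hence an isomorphism by Grothendieck's theorem; thus $\mathscr{N}$ has vanishing fibre at every point and so $\mathscr{N}=0$ by Nakayama, and Lemma \ref{lemma:isoG} yields $\mathscr{G}_{|U}=0$. If $g=p+4$, the fibre dimension is the constant $1$ and $U$ is reduced, so Grauert's theorem shows that $\mathscr{N}$ is a line bundle on $U$ with $\mathscr{N}\otimes\kappa(D)\cong H^0(C,\mathcal{O}(D))$. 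It then remains to prove $\mathscr{N}\cong\mathcal{O}_U$, which via Lemma \ref{lemma:isoG} gives $\mathscr{G}_{|U}\cong N_{K_C}^{\vee}$.

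This last identification is the main obstacle, since a line bundle with one-dimensional fibres need not be trivial on $U$. The plan is to carry the three isomorphisms above out in families: relative Serre duality for $\pr_U$ (with relative dualizing sheaf $\pr_{C_2}^*N_{K_C}$) gives $\mathscr{N}\cong\bigl(R^1\pr_{U,*}(\mathscr{E}\otimes\pr_{C_2}^*N_{2K_C-L})\bigr)^{\vee}$, and relative forms of Lemma \ref{lemma:lemmah1} and of Green's duality -- obtained by pushing forward the universal constructions over $U$ -- identify this with the sheaf bundling the groups $K_{g-p-4,0}(C,D,2K_C-L)$. For $g=p+4$ this is the relative $K_{0,0}$, namely $\pr_{U,*}\mathcal{O}_{C\times U}(\Xi_{1,g-4})$, and the tautological section of $\mathcal{O}(\Xi_{1,g-4})$ cutting out the universal divisor restricts on each fibre to the nonzero section of $\mathcal{O}(D)$, providing a nowhere-vanishing trivialization. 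The delicate part is the bookkeeping required to relativize Serre and Green duality and to verify the naturality that makes the fibrewise generators glue to this global section; once this is settled one concludes $\mathscr{N}\cong\mathcal{O}_U$, hence $\mathscr{G}_{|U}\cong N_{K_C}^{\vee}$.
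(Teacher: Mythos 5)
Your reduction via Lemma \ref{lemma:isoG} and the fibrewise analysis are sound, and for $g\neq p+4$ your argument (constant vanishing of $H^1(C_2,E_{K_C-D}^{\vee}\otimes S_{L-K_C})$ for \emph{every} $D\in U$, then cohomology and base change plus Nakayama) is correct and in fact a little more direct than what the paper does. You are also right that the fibrewise chain from Lemma \ref{lemma:c2specialD} only needs $h^0(C,D)=1$, and right that in the case $g=p+4$ Grauert makes $\mathscr{N}=R^1\pr_{U,*}(\mathscr{E}^{\vee}\otimes\pr_{C_2}^*S_{L-K_C})$ a line bundle whose fibres are $H^0(C,\mathcal{O}(D))$, and that the real content is showing this line bundle is trivial.

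That last step is exactly where your proposal has a genuine gap, and it is the essential case: for $g\neq p+4$ the Main Theorem already follows from Section \ref{section:secantconj}, so the whole point of this lemma is $g=p+4$. You propose to identify $\mathscr{N}$ with $\pr_{U,*}\mathcal{O}_{C\times U}(\Xi_{1,g-4})$ by ``relative forms'' of Serre duality, of Lemma \ref{lemma:lemmah1}, and of Green's duality, and you defer the verification as bookkeeping. Relativizing Green's duality $K_{0,2}(C,K_C-D,2K_C-L)\cong K_{g-p-4,0}(C,D,2K_C-L)^{\vee}$ as a natural isomorphism of sheaves over $U$ is not bookkeeping: Green's duality is a statement about individual Koszul complexes, and producing a sheaf-level duality compatible with base change, with control over how the fibrewise generators vary, is precisely the work that needs to be done. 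The paper avoids this entirely by computing $\mathscr{N}$ globally through a different chain: it writes $\mathscr{E}$ as a pushforward along the finite cover $C\times C_{c-1}\times U\to C_c\times U$, applies Grothendieck duality for that cover and relative Serre duality for the curve fibration $f\colon C\times C\times U\to C\times U$, uses the Leray spectral sequence for $C\times C\times U\to C\times U\to U$ together with the kernel-bundle sequence for $M_{2K_C-L}$, and lands on $\mathscr{N}\cong h_*\bigl(\pr_C^*(\wedge^{g-p-4}M_{2K_C-L})\otimes\mathcal{O}(\Xi_{1,g-4})\bigr)$; for $g=p+4$ this is $h_*\mathcal{O}(\Xi_{1,g-4})$, trivialized by the map induced from $\mathcal{O}\to\mathcal{O}(\Xi_{1,g-4})$, which is an isomorphism on each fibre since $h^0(C,D)=1$ on $U$. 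If you want to complete your version, you should either carry out a genuinely relative form of Green's duality (nontrivial), or replace that step by the paper's concrete dévissage through the finite cover and the kernel bundle $M_{2K_C-L}$.
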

\begin{proof}
	Thanks to Lemma \ref{lemma:isoG} this amounts to showing that $R^1\pr_{U,*}(\mathscr{E}^{\vee} \otimes \pr_{C_2}^*S_{L-K_C}) \cong \mathcal{O}_U$ when $g=p+4$ and that it vanishes otherwise. Thus, consider the map 
	\begin{equation}
	\sigma\times \operatorname{id} \colon C\times C_{c-1} \times U \lra C_c \times U
	\end{equation} 
	as in the proof of Lemma \ref{lemma:detE}. We keep in mind that for us $c=2$, but we keep the dependency on $c$ explicit for clarity. From the proof of Lemma \ref{lemma:detE}, we know that $\mathscr{E} \cong (\sigma\times \operatorname{id})_*(\pr_C^*K_C\otimes \mathcal{O}(-\Xi_{1,g-2c}))$, and since the map is a finite cover ramified along $\Xi_{1,c-1}$, Grothendieck duality gives that $\mathscr{E}^{\vee} \cong (\sigma\times \operatorname{id})_*(\pr_C^*K_1^{\vee}\otimes \mathcal{O}(\Xi_{1,g-2c}+\Xi_{1,c-1}))$.  We also have $\sigma^*S_{L-K_C}\cong \pr_C^*(L-K_C)\otimes \pr_{C_{c-1}}^*(L-K_C)$, hence the projection formula gives
	\begin{equation}
	\mathscr{E}^{\vee} \otimes \pr_{C_2}^*S_{L-K_C} \cong (\sigma\times \operatorname{id})_*\left( \pr_C^*(L-2K_C)\otimes \pr_{C_{c-1}}^*(L-K_C) \otimes \mathcal{O}(\Xi_{1,g-2c}+\Xi_{1,c-1}))\right)
	\end{equation}
	Since $(\sigma\times \operatorname{id})$ is a finite map, we see from the Leray spectral sequence that 
	\begin{equation}
	R^1\pr_{U,*}(\mathscr{E}^{\vee} \otimes \pr_{C_2}^*S_{L-K_C})\cong R^1\pr_{U,*}( \pr_1^*(L-2K_C)\otimes \pr_{2}^*(L-K_C) \otimes \mathcal{O}(\Xi_{1,g-4}+\Xi_{1,1}))).
	\end{equation}
	To compute the right-hand side,  we factor the projection to $U$ as the composition 
	\begin{equation}
	C\times C \times U \overset{f}{\lra} C \times U \overset{g}{\lra} U ,\qquad f\colon (x,y,D) \mapsto (x,D), \qquad h\colon (x,D)\mapsto D
	\end{equation}
	and we apply the Leray spectral sequence. First we observe that  
	\begin{multline}
	R^i f_*(\pr_{C}^*(L-2K_C)\otimes \pr_{C_{c-1}}^*(L-K_C) \otimes \mathcal{O}(\Xi_{1,g-2c}+\Xi_{1,c-1}))) \\ \cong R^if_*(\pr_{C_{c-1}}^*(L-K_C)\otimes \mathcal{O}(\Xi_{1,c-1}))\otimes \pr_C^*(L-2K_C)\otimes \mathcal{O}(\Xi_{1,g-2c})
	\end{multline} 
	from the projection formula. Moreover the restriction of $\pr_{C_{c-1}}^*(L-K_C)\otimes \mathcal{O}(\Xi_{1,c-1})$ to the fiber $\{x\}\times C \times \{D\}$ of $p$ is given by $L-K_C+x$, and Lemma \ref{lemma:p1veryampleness} and Lemma \ref{kveryample} show that $H^0(C,L-K_C+x) = H^1(C,2K_C-L-x)^{\vee}=0$. In particular, we get that $f_*(\pr_{C_{c-1}}^*(L-K_C)\otimes \mathcal{O}(\Xi_{1,c-1}))=0$  and the Leray spectral sequence shows that 
	\begin{multline}
		R^1\pr_{U,*}( \pr_C^*(L-2K_C)\otimes \pr_{C_{c-1}}^*(L-K_C) \otimes \mathcal{O}(\Xi_{1,g-2c}+\Xi_{1,c-1}))) \\ \cong h_*(R^1f_*(\pr_{C_{c-1}}^*(L-K_C)\otimes \mathcal{O}(\Xi_{1,c-1}))\otimes \pr_{C}^*(L-2K_C)\otimes \mathcal{O}(\Xi_{1,g-2c}) ).
	\end{multline} 
	Now, by Grothendieck duality we see that 
	\begin{equation}
	R^1f_*(\pr_{C_{c-1}}^*(L-K_C)\otimes \mathcal{O}(\Xi_{1,c-1})) \cong (f_*(\pr_{C_{c-1}}^*(2K_C-L)\otimes \mathcal{O}(-\Xi_{1,c-1})))^{\vee}
	\end{equation}
	 and from \eqref{evB} it follows that  $f_*(\pr_{C_{c-1}}^*(2K_C-L)\otimes \mathcal{O}(-\Xi_{1,c-1})) \cong \pr_C^*M_{2K_C-L}$, where $M_{2K_C-L}$ is the kernel bundle sitting in the exact sequence:
	\begin{equation}\label{eq:M2KC-L}
	0 \lra M_{2K_C-L} \lra H^0(C,2K_C-L)\otimes \mathcal{O}_C \lra 2K_C-L \lra 0
	\end{equation}
	 Notice that the sequence is exact on the right because of Lemma \ref{lemma:p1veryampleness}. Hence, we have proved that
	 \begin{equation}\label{eq:isostrano}
	 R^1\pr_{U,*}(\mathscr{E}^{\vee} \otimes \pr_{C_2}^*S_{L-K_C}) \cong h_*(\pr_C^*(M_{2K_C-L}^{\vee}\otimes (L-2K_C) ) \otimes \mathcal{O}(\Xi_{1,g-2c}) ) 
	 \end{equation}
	 Moreover, we see from the sequence \eqref{eq:M2KC-L} and Lemma \ref{lemma:p1veryampleness} that $M_{2K_C-L}$ is a bundle of rank $r = h^0(2K_C-L)-1 = g-p-3$, hence, the previous isomorphism becomes
	 \begin{equation}\label{eq:isostrano2}
	 R^1\pr_{U,*}(\mathscr{E}^{\vee} \otimes \pr_{C_2}^*S_{L-K_C}) \cong h_*(\pr_C^*(\wedge^{g-p-4}M_{2K_C-L}) \otimes \mathcal{O}(\Xi_{1,g-2c}) ).
	 \end{equation}
	 Observe that the restriction of $\pr_C^*(\wedge^{g-p-4}M_{2K_C-L}) \otimes \mathcal{O}(\Xi_{1,g-2c})$ to a fiber $C\times \{D\}$ is given by $\wedge^{g-p-4}M_{2K_C-L}\otimes \mathcal{O}_C(D)$, and standard results on kernel bundles show that $H^0(C,\wedge^{g-p-4}M_{2K_C-L}\otimes \mathcal{O}_C(D))$ is isomorphic to the kernel of the Koszul differential:
	 \begin{equation}
	  \operatorname{Ker} \left[ \wedge^{g-p-4}H^0(C,2K_C-L)\otimes H^0(C,D) \lra \wedge^{g-p-5}H^0(C,2K_C-L)\otimes H^0(C,D+L) \right]
	 \end{equation}
	  In particular, since $h^0(C,D)=1$ by definition of the set $U$, we see that this kernel is trivial whenever $g\ne p+4$, which proves what we want in this case.

	  Suppose instead that $g=p+4$: then \eqref{eq:isostrano2} becomes
	  \begin{equation}
	  R^1\pr_{U,*}(\mathscr{E}^{\vee} \otimes \pr_{C_2}^*S_{L-K_C}) \cong h_*\mathcal{O}(\Xi_{1,g-2c})  
	  \end{equation}
	  We claim that this sheaf is the trivial bundle: indeed, the natural map $\mathcal{O} \to \mathcal{O}(\Xi_{1,{g-2c}})$, induces another morphism $h_*\mathcal{O} \cong \mathcal{O}_U\to h_*\mathcal{O}(\Xi_{1,g-2c})$. By our chouce of $U$ we see that $h_*\mathcal{O}(\Xi_{1,g-2c})$ is a line bundle whose fiber at $D$ is identified with $H^0(C,D)$, and moreover, the map $\mathcal{O}_U \to \mathcal{O}(\Xi_{1,g-2c})$ on the fibers corresponds to the canonical map $H^0(C,\mathcal{O}) \to H^0(C,D)$, which is an isomorphism for each $D$ in $U$. Hence, it follows that $\mathcal{O}_U \to h_*\mathcal{O}(\Xi_{1,g-4})$ is an isomorphism as well.
\end{proof}

\textbf{Step 4:} We can finally give the proof of our Main Theorem

\begin{proof}[Proof of Main Theorem]
	For clarity, we repeat once again the strategy outlined at the end of Section \ref{section:c2}.  Let $C$ be a smooth curve of genus $g$, Clifford index $\operatorname{Cliff}(C)\geq 2$ and not bielliptic. Let also $L$ be a line bundle of degree $2g+p-1$ which is $(p+1)$-very ample. We want to prove that $K_{p,1}(C,L,L)=0$, or by duality, that $K_{g-2,1}(C,K_C-L,L)=0$.  Thanks to the Global Vanishing Criterion of Lemma \ref{cor:basicvanishingsyzygies}, it is enough to show that $H^0(C_{g-4},\mathscr{G}\otimes N_L)=0$, where $\mathscr{G}$ is the pushforward of $\operatorname{pr}_{C_{3}}^*(E_{K_C-L}\otimes N_L)\otimes  \mathcal{O}(-\Xi_{g-4,3})$ from $C_{g-4}\times C_3$. Since $\mathscr{G}$ is torsion-free it is enough to show that $H^0(U,\mathscr{G}\otimes N_L)=0$. However, if $g\ne p+4$, we know from Lemma \ref{lemma:R1pu} that $\mathscr{G}_{|U}=0$, and we conclude. If instead $g=p+4$, we know again from Lemma \ref{lemma:R1pu} that $\mathscr{G}_{|U} \cong N_{K_C}^{\vee}$, so that $H^0(U,\mathscr{G}\otimes N_L) \cong H^0(U,N_{K_C}^{\vee}\otimes N_L) \cong H^0(U,S_{L-K_C})$. By Remark \ref{remark:Ularge}, $U$ is a large open subset, hence $H^0(U,S_{L-K_C}) \cong H^0(C_{g-4},S_{L-K_C}) \cong \operatorname{Sym}^{g-4}H^0(C,L-K_C)$, and Lemma \ref{lemma:p1veryampleness} shows that $H^0(C,L-K_C)=0$. 
\end{proof}

\subsection{Concluding remarks}

It is natural to ask whether the strategy employed here could be used for the next cases of the Secant Conjecture, when the curve $C$ satisfies the Brill-Noether condition
\begin{equation}\label{eq:brillnoethergrowth}
\dim W^1_{g-c}(C) = \rho(g,1,g-c) = g-2c-2.
\end{equation}
For this, the key missing ingredient would be the vanishing statement for the cohomology groups
\begin{equation}
H^i(C_c,\operatorname{Sym}^i E_{K_C-D}^{\vee} \otimes S_{L-K_C}) = 0
\end{equation}
where $D\in C_{g-2c}$ is a general divisor. Similar vanishing statements were recently used in \cite{EinSingularities20} for syzygies of secant of high degree curves and in \cite{AgostiniAsymptotic2020} for surfaces.  Observe that the Brill-Noether condition \eqref{eq:brillnoethergrowth} corresponds to Aprodu's linear growth condition, which ensures the validity of Green's conjecture \cite{AproduRemarks2005} and has recently been explored further by Kemeny \cite{KemenyBetti2018}.

\printbibliography

\end{document}